\newtheorem{theorem}{Theorem}[section]
\newtheorem{lemma}[theorem]{Lemma}
\newtheorem{proposition}[theorem]{Proposition}
\newtheorem{remark}[theorem]{Remark}
\numberwithin{equation}{section}
\newcommand{\RR}{\mathbb{R}}
\newcommand{\beas}{\begin{eqnarray*}}
\newcommand{\eeas}{\end{eqnarray*}}
\newcommand{\ep}{\epsilon}
\begin{document}

\keywords{Stochastic Burgers' equation, Feynman-Kac representation, space-time white noise}
\subjclass[2010]{Primary 60H15. Secondary 60H20, 60G60}

\title{Stochastic Burgers' Equation on the Real Line: Regularity and Moment Estimates}

\author{Peter Lewis}

\author{David Nualart$^*$ }\thanks{$^*$David Nualart is partially supported by the NSF grant DMS1512891}

\address{%
Department of Mathematics\\
University of Kansas\\
Lawrence, KS 66045, USA}
\email{plewis85@ku.edu}
 \email{nualart@ku.edu}

\begin{abstract}
In this project we investigate the stochastic Burgers' equation with multiplicative space-time white noise on an unbounded spatial domain. We give a random field solution to this equation by defining a process via a kind of Feynman-Kac representation which solves a stochastic partial differential equation such that its Hopf-Cole transformation solves Burgers' equation. Finally, we obtain H\"older regularity and moment estimates for the solution to Burgers' equation.
\end{abstract}

\maketitle

\section{Introduction} 

We are concerned with the following (formal) version of Burgers' equation
\begin{align*}
\frac{\partial}{\partial t} u(t,x) = \frac{\partial^2}{\partial x^2} u(t,x) -\frac{1}{2} \frac{\partial}{\partial x} u(t,x)^2 + \sigma(t,x,u(t,x))\frac{\partial^2 W}{\partial t \partial x}
\end{align*}
indexed by $(t,x) \in [0,T]\times \mathbb{R}$, given a nonrandom initial condition $u_0$ and a Brownian sheet $W$. To study this equation rigorously, we understand the above in its \emph{mild form}; that is, as an integral equation:
\begin{equation}
\begin{aligned}  \label{eq1}
 u(t,x) & = \int_{\mathbb{R}} G_t(x-y) u_0(y) dy + \frac{1}{2} \int_0^t \int_{\mathbb{R}}\frac{\partial }{\partial y}G_{t-s}(x-y) u(s,y)^2 dy ds  \\
& + \int_0^t \int_{\mathbb{R}} G_{t-s}(x-y) \sigma_s(y) W(ds,dy),
\end{aligned}
\end{equation}
where $\sigma_t(x) \equiv \sigma(t,x,u(t,x))$ is used for shorthand, $G$ is the heat kernel $$G_t(x) = (4\pi t)^{-1/2} e^{-x^2/4t},$$ and the stochastic integral is understood in the Walsh sense. 

In \cite{burger1}, the authors investigate this equation with $\sigma \equiv 1$; that is, with \emph{additive noise}. In \cite{burger3}, multiplicative noise ($\sigma$ depending on $u$) is studied on the spatial interval $[0,1]$, rather than on $\mathbb{R}$. The authors of both of these papers construct an explicit solution by defining a process via a Feynman-Kac representation such that its Hopf-Cole transformation solves Burgers' equation. On the other hand, existence and uniqueness to a general class of semilinear stochastic partial differential equations (SPDEs) on unbounded spatial domains, which contains (\ref{eq1}), is shown by Gy\"{o}ngy and Nualart in \cite{burger2} using fixed point arguments which follow from some maximal inequalities on stochastic convolutions. However, they do not consider an explicit construction of the solution. Hence, the primary aim of this paper is to construct a solution to (\ref{eq1}), similar to what is done in \cite{burger1} and \cite{burger3}, by defining and transforming a process with a Feynman-Kac representation. We then obtain uniqueness for free from \cite{burger2}. Finally, we prove H\"older regularity and moment estimates to the solution of Burgers' equation. In the case of additive noise, such moment estimates have been studied. However, to our knowledge, H\"older regularity has not been established in other works on stochastic Burgers' equation with space-time white noise. 

The paper is organized as follows. First, we define a process, $\psi$, via a kind of Feynman-Kac representation. Then, we establish several properties of $\psi$, such as moment bounds, H\"older regularity, and differentiability. Next, we show that the Hopf-Cole transformation of $\psi$, which (formally at the moment) is
\begin{align*} u(t,x) = - 2\frac{\partial }{\partial x} \log \psi(t,x), \end{align*}
solves  (\ref{eq1}). Appealing to the uniqueness result in \cite{burger2}, our solution is unique. Lastly, we obtain H\"older regularity and an upper bound on moments of the solution to Burgers' equation using properties of the process $\psi$. 

Throughout much of the paper, we follow similar steps as in \cite{burger3}, but have to adjust almost all of the arguments to handle the challenges posed by an unbounded domain. As such, due to difficulties with integrability, many of our assumptions differ from those in \cite{burger3}, though they are consistent with \cite{burger2}.

%%%%%%%%%%%%%%%%%%%%%%%%%%%%%%%%%%%%%%%%%%%%%%%%%%%%%%%%%%%%%%%%%%%%%%%%%%%%%%%%%%%%%%%%%%%%%%%%%%%%%%%%%%%%%%%%%%%%%%%%%%%%%%%%%%%%%%%%%%%%%%%%%%%%%%%%%%%%%%%%%%%%%%%%%%%%%%%%%%%%%%%%%%%%%%%%%%%%%%%%%%%%

\section{Preliminaries}

Let $W = \{W(t,x) , t \in  \RR_+, x \in \mathbb{R}\}$ be a zero-mean Gaussian random field defined on a complete probability space $(\Omega, \mathcal{F}, \mathbb{P})$, with covariance
$$
\mathbb{E}[W(s,x)W(t,y)] = (s\wedge t) (|x| \wedge |y|)\mathbf{1}_{[0,\infty)}(xy)
$$
for $s,t  \ge 0$, $x,y \in \mathbb{R}$. In other words, $W$ is a \emph{Brownian sheet} on $\mathbb{R}^2$. For any $t\ge 0$, we denote by $\mathcal{F}_t$ the $\sigma$-field generated by the random variables $\{W(s,x), s\in [0,t], x\in \mathbb{R}\}$ and the sets of probability zero.
The stochastic integral with respect to $W$ in (\ref{eq1}) is understood in the Walsh sense. For a careful treatment of this integration theory, see John Walsh's seminal work on SPDEs \cite{walsh}, for example. We use the notation $\mathbb{E}(\cdot)$ to represent expectation with respect to $W$, and denote its corresponding norm by $\| \cdot \|_p = \mathbb{E}(|\cdot|^p)^{1/p}$.   We will make use of the convention $\mathbf{1}_{[b,a]} = -\mathbf{1}_{[a,b]}$, whenever $b>a$.

Throughout the paper we assume the following conditions:

\medskip
\noindent
(A1) The initial condition $u_0$ is a deterministic, continuous, and bounded function such that $u_0 \in L^2(\mathbb{R})\cap L^1(\mathbb{R})$.

\noindent
(A2) $\sigma:  \RR_+ \times \mathbb{R} ^2\rightarrow \mathbb{R}$ is a Borel function satisfying the following Lipschitz and growth properties
\begin{eqnarray}
|\sigma(t,x,r) - \sigma(t,x,v)|  &\le&  L |r-v|  \\    \label{eq2}
|\sigma(t,x,r)|  &\le&  f(x) \label{eq3}
\end{eqnarray}
for all $t \ge 0$, $x,r,v \in \mathbb{R}$ and  for some constant $L>0$ and some non-negative  function  $f \in L^2(\mathbb{R})\cap L^q(\mathbb{R})$, where $q>2$.

\medskip
Under these conditions, it is proved by  Gy\"{o}ngy and Nualart in \cite{burger2} that there exists a unique $L^2(\mathbb{R})$-valued $\mathcal{F}_t$-adapted continuous  stochastic process $u=\{u(t), t \ge 0\}$, which satisfies the integral  equation  (\ref{eq1}). Furthermore, the process $u$  has a continuous version in $(t,x)$.

Before our discussion of the Feynman-Kac representation, we prove a technical lemma regarding regularity of the heat kernel $G_t(x)=(4\pi t)^{-1/2} e^{-x^2/4t}$ that  will be used several times in the paper.

\begin{lemma} \label{lem1}
Let $\theta_1 >0$, $\theta_2 \ge 0$ and $\beta >0$  be such that
\begin{equation}  \label{kk1}
\beta( \theta_1-\theta_2-1) <2< \beta( 3\theta_1  -\theta_2-1).
\end{equation}
Then, for any $0 < t_1 <t_2 $, we have
\[
\int_0^{t_1}  \left( \int_{\mathbb{R}} | G_{t_2-s}(x) - G_{t_1-s}(x)|^{\theta_1} |x|^{\theta_2} dx \right)^{\beta} ds 
\le C (t_2-t_1) ^{1- \beta( \theta_1-\theta_2-1)/2},
\]
for some constant $C$ depending on $\theta_1$, $\theta_2$ and $\beta$.
\end{lemma}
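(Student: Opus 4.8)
The plan is to reduce the whole estimate, by scaling, to the convergence of a single one-dimensional integral whose two endpoints correspond exactly to the two inequalities in (\ref{kk1}). First I would change variables $a=t_1-s$ (so $ds=da$ and the range becomes $a\in(0,t_1)$) and abbreviate $h=t_2-t_1$, so that the quantity to be bounded is $J=\int_0^{t_1} I(a)^\beta\,da$, where $I(a)=\int_{\mathbb{R}}|G_{a+h}(x)-G_a(x)|^{\theta_1}|x|^{\theta_2}\,dx$. Writing $G_t(x)=t^{-1/2}g(x/\sqrt t)$ with $g(y)=(4\pi)^{-1/2}e^{-y^2/4}$ and substituting $x=\sqrt a\,y$ in the inner integral exhibits the self-similar structure: with $p=(1+\theta_2-\theta_1)/2$ and $\rho=h/a$ one finds
\[
I(a)=a^{\,p}\,\Phi(\rho),\qquad
\Phi(\rho)=\int_{\mathbb{R}}\left|(1+\rho)^{-1/2}g\!\left(y/\sqrt{1+\rho}\right)-g(y)\right|^{\theta_1}|y|^{\theta_2}\,dy .
\]

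Setting $\gamma:=\beta p=\beta(1+\theta_2-\theta_1)/2$, so that $I(a)^\beta=a^{\gamma}\Phi(h/a)^\beta$, I would then substitute $\rho=h/a$ directly in $J$, which yields $J=h^{1+\gamma}\int_{h/t_1}^{\infty}\rho^{-\gamma-2}\Phi(\rho)^\beta\,d\rho$. Because the integrand is non-negative, the integral over $(h/t_1,\infty)$ is dominated by $K:=\int_0^{\infty}\rho^{-\gamma-2}\Phi(\rho)^\beta\,d\rho$, and since $1+\gamma=1-\beta(\theta_1-\theta_2-1)/2$ is precisely the target exponent, the lemma follows with $C=K$ as soon as $K<\infty$. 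The problem thus collapses to showing that $K$ converges, and the two hypotheses in (\ref{kk1}) will govern its two endpoints separately.

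To control $\Phi$ near $\rho=0$ I would use the fundamental theorem of calculus in the scaling parameter: differentiating $F(\tau,y)=(1+\tau)^{-1/2}g(y/\sqrt{1+\tau})$ in $\tau$ gives $(1+\tau)^{-3/2}g(y/\sqrt{1+\tau})\bigl(y^2/(4(1+\tau))-\tfrac12\bigr)$, which for $\tau\in[0,1]$ is bounded by a fixed Gaussian-times-polynomial $H(y)$. Hence the integrand of $\Phi$ is at most $\rho^{\theta_1}H(y)^{\theta_1}$ for $\rho\le1$, giving $\Phi(\rho)\le C\rho^{\theta_1}$; the integrand of $K$ is then $O(\rho^{\theta_1\beta-\gamma-2})$ near $0$, which is integrable exactly when $\theta_1\beta-\gamma>1$, i.e. when $\beta(3\theta_1-\theta_2-1)>2$. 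For large $\rho$ I would instead use $|u-v|^{\theta_1}\le C(|u|^{\theta_1}+|v|^{\theta_1})$ and rescale the spread-out Gaussian $(1+\rho)^{-1/2}g(\cdot/\sqrt{1+\rho})$ against the weight $|y|^{\theta_2}$, obtaining $\Phi(\rho)\le C\max\{(1+\rho)^{p},1\}$; the integrand of $K$ is then $O(\rho^{-2})$ when $p\ge0$ and $O(\rho^{-\gamma-2})$ when $p<0$, integrable at infinity exactly when $\gamma>-1$, i.e. when $\beta(\theta_1-\theta_2-1)<2$. Together these two one-sided bounds give $K<\infty$.

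The Gaussian bookkeeping in the last paragraph is routine; the step that requires genuine care is the scaling identity $I(a)=a^{p}\Phi(h/a)$ and the ensuing reduction $J=h^{1+\gamma}\int_{h/t_1}^{\infty}(\cdots)$, since this is what forces the correct power of $t_2-t_1$ and, crucially, disentangles the $h$-dependence from the $t_1$-dependence so that the constant $C$ comes out independent of $t_1,t_2$. The main obstacle is therefore conceptual rather than computational: one must pin down that the growth of $\Phi(\rho)$ as $\rho\to\infty$ is only of order $\rho^{p}$ (it is tempting to overestimate the rescaled first term), so that the large-$\rho$ integrand truly decays like $\rho^{-2}$ when $p\ge0$, and one must check that the two integrability thresholds match the two hypotheses in (\ref{kk1}) exactly, with no slack that would either break the bound or require a stronger assumption than stated.
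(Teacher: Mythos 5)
Your proof is correct and follows essentially the same route as the paper: the substitutions $s\mapsto t_1-s$, $x=\sqrt{a}\,y$, and $\rho=h/a$ are exactly the paper's two changes of variables, producing the same factor $(t_2-t_1)^{1-\beta(\theta_1-\theta_2-1)/2}$ times the integral $\int_0^\infty\rho^{-\gamma-2}\Phi(\rho)^\beta\,d\rho$. The only difference is that you actually verify that this integral converges under (\ref{kk1}) (the small-$\rho$ bound $\Phi(\rho)\le C\rho^{\theta_1}$ via the derivative in the scaling parameter, and the large-$\rho$ bound $\Phi(\rho)\le C\max\{(1+\rho)^{p},1\}$), a step the paper simply asserts.
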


\begin{proof}
Set $\tau = t_2-t_1$. Making the change of variables $x=\sqrt{s} y$ and $s=\tau/\sigma$, yields
\beas
&& \int_0^{t_1}  \left( \int_{\mathbb{R}} | G_{t_2-s}(x) - G_{t_1-s}(x)|^{\theta_1} |x|^{\theta_2} dx \right)^{\beta} ds  \\
&& \quad =
\int_0^{t_1}  \left( \int_{\mathbb{R}} \left|  \frac 1{ \sqrt{4\pi (\tau+s)}} e^{-\frac {x^2}{ 4(\tau+s)}} -  \frac 1{\sqrt{4\pi s}} e^{-\frac {x^2}{ 4s}}  \right|^{\theta_1} |x|^{\theta_2} dx \right)^{\beta} ds \\
&& \quad \le C  (t_2-t_1) ^{1- \beta( \theta_1-\theta_2-1)/2}
\int_0^\infty \sigma^{ -2+ \beta( \theta_1 -\theta_2-1)/2}  \\
&&\qquad \qquad \times
  \left( \int_{\mathbb{R}} \left|  \frac 1{ \sqrt{\sigma+1}} e^{-\frac {y^2}{ 4(\sigma+1)}} -  e^{-\frac {y^2}{ 4}}  \right|^{\theta_1} |y|^{\theta_2} dy \right)^{\beta} d\sigma.
\eeas
Then,  condition (\ref{kk1}) implies that the above integral in $d\sigma$ is finite, and we get the desired estimate.
\end{proof}
 
Throughout the paper we will denote by $C$ a generic constant that might depend on   $\sigma$, $f$, $u_0$, $T$ and the exponent $p$ we are considering. The value of this constant may be different from line to line. However, we will specify dependence where we feel it may be relevant.
%%%%%
%%%%%

%%%%%%%%%%%%%%%%%%%%%%%%%%%%%%%%%%%%%%%%%%%%%%%%%%%%%%%%%%%%%%%%%%%%%%%%%%%%%%%%%%%%%%%%%%%%%%%%%%%%%%%%%%%%%%%%%%%%%%%%%%%%%%%%%%%%%%%%%%%%%%%%%%%%%%%%%%%%%%%%%%%%%%%%%%%%%%%%%%%%%%%%%%%%%%%%%%%%%%%%%%%%

\section{Feynman-Kac Representation}
We now define a process via a kind of Feynman-Kac formula that will be the main focus of this paper. Given $u_0$, set 
$$
 \psi_0(x):= \exp\left\{-\frac{1}{2}\int_0^x u_0(y)dy \right\}
$$
with the convention that the integral is on the interval $[x,0]$ if $x< 0$. Let  $\beta=\{\beta_s, s\in [0,t]\}$ be a backward Brownian motion (BWBM) that is independent of $W$, starting at $x \in \mathbb{R}$ at time $t$ and with variance $2(t-s)$. We use the notation $\mathbb{E}_{x,t}^{\beta}$ to denote the expectation with respect to the law of the BWBM. 
Let $u$ be the mild solution to Burgers' equation. That is, $u$ satisfies (\ref{eq1}). We will make use of  the notation $\sigma_s(y):= \sigma(s,y,u(s,y))$.  Set 
$$
M_t^{\beta}:= \int_0^t \int_{\mathbb{R}} \sigma_s(y) \mathbf{1}_{[0,\beta_s]}(y) W(ds,dy), \hspace{1mm}
$$
with the convention that the indicator function is on the interval $[\beta_s,0]$ if the BWBM is negative at time $s$. Observe that this stochastic integral is a well-defined martingale due to the square-integrability assumption (\ref{eq3}) on $\sigma$. With the above notation in mind, define the two-parameter stochastic process $\psi$ by
\begin{equation}
 \label{eq4} \psi(t,x) := \mathbb{E}_{x,t}^{\beta} \left[ \psi_0(\beta_0) e^{-\frac{1}{2} M_t^{\beta}}\right].
\end{equation}
We first establish some estimates of moments of the process $\psi$, then show that it satisfies a certain integral equation. 

 \begin{proposition}  \label{prop1}
For all $t  \ge 0$, $x \in \mathbb{R}$, and integers $p \geq 2$, we have moment estimates of the form
\begin{equation} \label{eq8}
 \|\psi(t,x)\|_p \leq  \exp\left( \frac { tp}4 \|f\|^2_{L^2(\RR)} + \frac 12 \| u_0\|_{L^1(\RR)} \right).
\end{equation}
\end{proposition}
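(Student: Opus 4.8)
The plan is to exploit the two independent sources of randomness separately: the backward Brownian motion $\beta$, which is integrated out in the definition \eqref{eq4}, and the sheet $W$, against which the $L^p$-norm is measured. Since $\psi(t,x)\ge 0$ it suffices to bound $\mathbb{E}[\psi(t,x)^p]$. The factor $\psi_0(\beta_0)$ is harmless: because $\big|\int_0^{\beta_0}u_0(y)\,dy\big|\le\|u_0\|_{L^1(\RR)}$ for every value of $\beta_0$, one has the deterministic bound $\psi_0(\beta_0)\le\exp(\tfrac12\|u_0\|_{L^1(\RR)})$, which accounts for the second term in the exponent and can be pulled out at the outset. The whole problem then reduces to controlling the exponential moments of the Walsh martingale $M_t^\beta$.

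For the $W$-randomness I would use Minkowski's integral inequality to interchange the $L^p(\Omega)$-norm with the expectation $\mathbb{E}_{x,t}^{\beta}$, obtaining
\[
\|\psi(t,x)\|_p\le e^{\frac12\|u_0\|_{L^1(\RR)}}\,\mathbb{E}_{x,t}^{\beta}\Big[\big\|e^{-\frac12 M_t^\beta}\big\|_p\Big],
\]
where the inner norm is taken in $W$ with the path of $\beta$ frozen. For a fixed path, $\big\|e^{-\frac12 M_t^\beta}\big\|_p^p=\mathbb{E}\big[e^{-\frac p2 M_t^\beta}\big]$, so it remains to estimate the exponential moment of a single Walsh integral. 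I would do this with the Dol\'eans (stochastic) exponential: setting $N_t=-\frac p2 M_t^\beta$, one has $e^{N_t}=\mathcal{E}(N)_t\,\exp(\tfrac12\langle N\rangle_t)$, where $\mathcal{E}(N)$ is a nonnegative local martingale, hence a supermartingale with $\mathbb{E}[\mathcal{E}(N)_t]\le 1$. The quadratic variation is $\langle N\rangle_t=\frac{p^2}{4}\int_0^t\int_{\RR}\sigma_s(y)^2\mathbf{1}_{[0,\beta_s]}(y)^2\,dy\,ds$, which by the growth bound \eqref{eq3} together with $\int_{\RR}f(y)^2\mathbf{1}_{[0,\beta_s]}(y)^2\,dy\le\|f\|_{L^2(\RR)}^2$ is bounded \emph{deterministically} by $\frac{p^2}{4}t\|f\|_{L^2(\RR)}^2$. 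Pulling this nonrandom bound out and invoking the supermartingale property gives $\mathbb{E}[e^{N_t}]\le\exp\big(\tfrac{p^2}{8}t\|f\|_{L^2(\RR)}^2\big)$, uniformly in the frozen path; assembling the pieces then yields \eqref{eq8} (indeed with the slightly sharper constant $tp/8$, the stated constant $tp/4$ following a fortiori or from a coarser estimate of $\langle N\rangle_t$).

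The step I expect to be the main obstacle — and the one that distinguishes this from a textbook Gaussian computation — is that $\sigma_s(y)=\sigma(s,y,u(s,y))$ is itself random and $\mathcal{F}_s$-adapted, so $M_t^\beta$ is a stochastic integral with a random integrand and $e^{-\frac12 M_t^\beta}$ is \emph{not} the exponential of a Gaussian. This is precisely what the Dol\'eans-exponential argument absorbs: the only property of $\sigma$ used is the pathwise bound $|\sigma|\le f$, which forces $\langle N\rangle_t$ to be bounded by a nonrandom constant, so Novikov's condition holds trivially and the estimate $\mathbb{E}[\mathcal{E}(N)_t]\le 1$ is all that is required, with Gaussianity never invoked. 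Finally, the interchange of $\mathbb{E}$ and $\mathbb{E}_{x,t}^{\beta}$ — whether through Minkowski's inequality or, for integer $p$, by rewriting $\psi(t,x)^p$ as a product over $p$ independent copies $\beta^1,\dots,\beta^p$ of the backward motion and applying Fubini — is justified by positivity and the independence of $\beta$ from $W$; the independent-copies variant is presumably the reason the statement is phrased for integer $p\ge 2$.
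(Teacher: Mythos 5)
Your argument is correct, and it reaches the stated bound (in fact the sharper constant $tp/8$, matching what the paper only records for $\psi^{-1}$ in Remark \ref{rem1}), but it is genuinely different from the paper's proof. The paper replicates the $\beta$-expectation: it writes $\|\psi(t,x)\|_p^p$ using $p$ independent backward Brownian motions $\beta^1,\dots,\beta^p$, applies the multivariate It\^o formula to $\exp\{-\frac12\sum_j M_t^{\beta^j}\}$, bounds the $p^2$ quadratic covariations $\langle M^{\beta^i},M^{\beta^j}\rangle$ by $\|f\|^2_{L^2(\RR)}$, and closes the estimate with Gronwall's lemma — this is why the statement is restricted to integer $p$. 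You instead pull the $\beta$-expectation outside the $L^p(W)$-norm by Minkowski's integral inequality and, for a frozen path of $\beta$, bound $\mathbb{E}[e^{-\frac p2 M_t^\beta}]$ via the Dol\'eans exponential and the supermartingale inequality $\mathbb{E}[\mathcal{E}(N)_t]\le 1$, using only the deterministic bound $\langle N\rangle_t\le \frac{p^2}{4}t\|f\|^2_{L^2(\RR)}$. What your route buys: it works directly for all real $p\ge 1$ (no integrality needed), avoids Gronwall, and yields the constant $tp/8$. What the paper's route buys: the replica representation with explicit cross-covariations is the form that extends to finer moment computations, and it sidesteps any discussion of conditioning the Walsh integral on a frozen path of $\beta$ — a step you should justify (by independence of $\beta$ and $W$ and a stochastic Fubini argument, the conditional law of $M_t^\beta$ given $\beta=b$ is that of the Walsh integral with the deterministic path $b$ inserted), though this is at the same level of rigor as the interchange of $\mathbb{E}$ and $\mathbb{E}^{\vec\beta}_{x,t}$ that the paper itself performs. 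Both proofs ultimately rest on the same two facts: $\psi_0(\beta_0)\le e^{\frac12\|u_0\|_{L^1(\RR)}}$ and $|\sigma|\le f\in L^2(\RR)$.
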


\begin{proof}
Let $\vec{\beta} = \{\beta^i\}_{i=1}^p$ be $p$ independent  backward Brownian motions on $[0,t]$ starting at $x \in \mathbb{R}$ at time $t$, with variance $2(t-s)$. By independence and Fubini's theorem,  we have
\begin{align*}
\|\psi(t,x)\|_p^p = \mathbb{E}\big( | \psi(t,x)|^p\big) &= \mathbb{E} \left[ \prod_{i=1}^p \mathbb{E}^{\beta^i}_{x,t}\left(\psi_0(\beta_0^i)e^{-\frac{1}{2} M_t^{\beta^i}} \right) \right]\\
&= \mathbb{E} \left[  \mathbb{E}^{\vec{\beta}}_{x,t}\left(\prod_{i=1}^p\psi_0(\beta_0^i)e^{-\frac{1}{2} M_t^{\beta^i}} \right) \right]\\
&=\mathbb{E}^{\vec{\beta}}_{x,t}  \left[ \left(\prod_{i=1}^p\psi_0(\beta_0^i)\right) \mathbb{E}\left(\exp\Big\{-\frac{1}{2} \sum_{j=1}^p M_t^{\beta^j}\Big\} \right) \right].\\
\end{align*}
Now, by the multivariate It$\hat{\text{o}}$ formula,
\begin{align*}
e^{-\frac{1}{2} \sum_{j=1}^p M_t^{\beta^j}} &= 1 -\frac 12 \int_0^t \sum_{i=1}^p  e^{-\frac{1}{2} \sum_{j=1}^p M_s^{\beta^j}} dM_s^{\beta^i}+\int_0^t \sum_{i,j=1}^p \frac{1}{8} e^{-\frac{1}{2} \sum_{k=1}^p M_s^{\beta^k}}d\langle M^{\beta^i},M^{\beta^j}\rangle_s .
\end{align*}
Since the quadratic covariation of these martingales is 
\[
\langle M^{\beta^i},M^{\beta^j}\rangle_t = \int_0^t ds \int_{\mathbb{R}}dy \hspace{1mm} \sigma^2_s(y)  \mathbf{1}_{[0,\beta_s^i]}(y)\mathbf{1}_{[0,\beta_s^j]}(y) ,
\]
taking the expectation of the above It$\hat{\text{o}}$ expansion yields
\begin{align*}
\mathbb{E}\Big(e^{-\frac{1}{2} \sum_{j=1}^p M_t^{\beta^j}}\Big)&= 1 +\frac{1}{8} \int_0^t  \hspace{1mm}\mathbb{E} 
\Big(  e^{-\frac{1}{2} \sum_{k=1}^p M_s^{\beta^k}} \sum_{i,j=1}^p \mathbf{1}_{\beta_s^i \beta_s^j >0} \int_0^{|\beta^i| \wedge |\beta^j|} [\sigma^2_s(y)+ \sigma^2_s(-y)] dy \Big)  ds \\
&\leq 1+\frac{p^2}{4} \|f\|_{L^2(\mathbb{R})}^2\int_0^t \mathbb{E} \Big(e^{-\frac{1}{2} \sum_{j=1}^p M_s^{\beta^j}}\Big) ds.
\end{align*}
Recall a version of Gronwall's lemma which states that if a function $g$ satisfies 
$
\displaystyle g(t) \leq a(t) + \int_0^t b(s) g(s) ds,
$
where $a$ is non-decreasing and $b$ is non-negative, then $g$ satisfies
$
\displaystyle g(t) \leq a(t) e^{\int_0^t b(s) ds}.
$
Hence, we have
$$
\mathbb{E}\Big(e^{-\frac{1}{2} \sum_{j=1}^p M_t^{\beta^j}}\Big) \leq \exp\Big(\frac{\|f\|^2_{L^2(\mathbb{R})}}{4} t p^2\Big).
$$
Therefore,
$$
\|\psi(t,x)\|_p^p =\mathbb{E}^{\vec{\beta}}_{x,t}  \left[ \left(\prod_{i=1}^p\psi_0(\beta_0^i)\right) \mathbb{E}\left(\exp\Big\{-\frac{1}{2} \sum_{j=1}^p M_t^{\beta^j}\Big\} \right) \right] \leq a^p e^{b t p^2},
$$
where $a=e^{\frac 12\|u_0\|_{L^1(\mathbb{R})}}$ and $b= \frac{1}{4}\|f\|^2_{L^2(\mathbb{R})}$.
\end{proof}

\begin{remark}  \label{rem1} 
Using Jensen's inequality we can show, in the same way as before, that for all integers $p\ge 2$,
\begin{equation}  \label{kk7}
  \|\psi(t,x)^{-1}\|_p ^p \le  \exp\left( \frac { tp}8 \|f\|^2_{L^2(\RR)} + \frac 12 \| u_0\|_{L^1(\RR)} \right).
 \end{equation}
 In fact,
 \[
 \psi(t,x)^{-1} \le \mathbb{E} ^\beta_{x,t} \left[ \exp\left\{ \frac 12 \int_0^x u_0(y) dy +\frac 12 M_t ^\beta \right\} \right].
 \]
Proposition \ref{prop1} implies that  for any $T>0$
\begin{equation} \label{mp}
M_{p,T}: = \sup_{t\in [0,T], x\in \mathbb{R}} \| \psi(t,x)\|_p <\infty
\end{equation}
  and 
\begin{equation} \label{mp2}
 \sup_{t\in [0,T], x\in \mathbb{R}} \| \psi(t,x)^{-1}\|_p <\infty
\end{equation}
for all real numbers $p\ge 2$.
 \end{remark}

%%%%%
%%%%%

Next, we show that $\psi$ satisfies a particular integral equation.
\begin{proposition}  \label{prop3.2}
Let $\psi$ be the process defined in (\ref{eq4}) and  let $G_t(x)$ be the heat kernel as before. Then, for $t \ge0 $, $x \in \mathbb{R}$, $\psi(t,x)$ satisfies
 \begin{equation}  \label{eq5}
 \begin{aligned}
 \psi(t,x) &= \int_{\mathbb{R}} G_t(x-y) \psi_0(y) dy -\frac{1}{2} \int_0^t \int_{S} {\rm sign} (y)G_{t-s}(x-z) \psi(s,z) \sigma_s(y) dz W(ds,dy)\\
& + \frac{1}{8} \int_0^t \int_{S} G_{t-s}(x-z) \psi(s,z) \sigma^2_s(y)  dz dyds,
\end{aligned}
\end{equation}
where $$
S := \big\{(y,z) \in \mathbb{R}^2 : |z| \geq |y|, \text{ and }~ yz \geq 0 \big\}.
$$
\end{proposition}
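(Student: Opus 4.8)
The plan is to start from the definition (\ref{eq4}), expand the exponential weight $e^{-\frac12 M_t^\beta}$ by It\^o's formula in the \emph{forward} time variable, and then push the backward-Brownian expectation $\mathbb{E}^\beta_{x,t}$ through each resulting term, using the Markov property of the BWBM to generate the heat kernel $G_{t-s}(x-z)$ together with a factor $\psi(s,z)$. Conditioning on the whole path $\beta$ (legitimate, since $\beta$ is independent of $W$), the process $M^\beta$ is a continuous $\mathcal{F}_t$-martingale with adapted integrand $\sigma_s(y)\mathbf{1}_{[0,\beta_s]}(y)$ and quadratic variation $d\langle M^\beta\rangle_s = \big(\int_{\mathbb{R}}\sigma_s^2(y)\mathbf{1}_{[0,\beta_s]}(y)^2\,dy\big)\,ds$. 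The exponential It\^o formula then gives
\begin{equation*}
e^{-\frac12 M_t^\beta} = 1 - \frac12\int_0^t e^{-\frac12 M_s^\beta}\,dM_s^\beta + \frac18\int_0^t e^{-\frac12 M_s^\beta}\,d\langle M^\beta\rangle_s .
\end{equation*}
Multiplying by $\psi_0(\beta_0)$ and taking $\mathbb{E}^\beta_{x,t}$ produces three terms, which I claim match the three terms of (\ref{eq5}).

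The first term is immediate: since $\beta_s$ is Gaussian with mean $x$ and variance $2(t-s)$ under $\mathbb{E}^\beta_{x,t}$, the endpoint $\beta_0$ has density $G_t(x-\cdot)$, whence $\mathbb{E}^\beta_{x,t}[\psi_0(\beta_0)] = \int_{\mathbb{R}}G_t(x-y)\psi_0(y)\,dy$. The decisive tool for the remaining two terms is the Chapman--Kolmogorov identity, valid for bounded measurable $g$ and $s\in[0,t]$,
\begin{equation*}
\mathbb{E}^\beta_{x,t}\big[\psi_0(\beta_0)\,e^{-\frac12 M_s^\beta}\,g(\beta_s)\big] = \int_{\mathbb{R}} G_{t-s}(x-z)\,g(z)\,\psi(s,z)\,dz .
\end{equation*}
This follows by splitting the path at time $s$: conditionally on $\beta_s=z$, the Markov property makes $\beta|_{[0,s]}$ an independent BWBM from $z$ at time $s$, so that $\mathbb{E}^{\beta|_{[0,s]}}_{z,s}[\psi_0(\beta_0)e^{-\frac12 M_s^\beta}]=\psi(s,z)$ (the noise increments $W|_{[0,s]}$ and $W|_{[s,t]}$ being independent), while $\beta_s$ carries the density $G_{t-s}(x-\cdot)$. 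Applying this with $g(z)=\int_{\mathbb{R}}\sigma_s^2(y)\mathbf{1}_S(y,z)\,dy$ to the It\^o correction term — after an ordinary Fubini to exchange $\int_0^t ds$ with $\mathbb{E}^\beta_{x,t}$, and using $\mathbf{1}_{[0,\beta_s]}(y)^2=\mathbf{1}_S(y,\beta_s)$ — yields exactly the drift term $\frac18\int_0^t\int_S G_{t-s}(x-z)\psi(s,z)\sigma_s^2(y)\,dz\,dy\,ds$.

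The stochastic term is the crux. Writing $dM_s^\beta=\int_{\mathbb{R}}\sigma_s(y)\mathbf{1}_{[0,\beta_s]}(y)\,W(ds,dy)$ and using $\mathbf{1}_{[0,\beta_s]}(y)=\mathrm{sign}(y)\,\mathbf{1}_S(y,\beta_s)$ (which encodes the convention $\mathbf{1}_{[b,a]}=-\mathbf{1}_{[a,b]}$), I would interchange the path expectation $\mathbb{E}^\beta_{x,t}$ with the Walsh stochastic integral by a stochastic Fubini theorem, and then apply the Chapman--Kolmogorov identity with $g(z)=\mathbf{1}_S(y,z)$ for each fixed $y$. This converts $-\frac12\,\mathbb{E}^\beta_{x,t}\big[\psi_0(\beta_0)\int_0^t e^{-\frac12 M_s^\beta}\,dM_s^\beta\big]$ into $-\frac12\int_0^t\int_S \mathrm{sign}(y)\,G_{t-s}(x-z)\psi(s,z)\sigma_s(y)\,dz\,W(ds,dy)$, as required; the resulting integrand is $\mathcal{F}_s$-adapted because both $\sigma_s(y)$ and $\psi(s,z)$ are.

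The main obstacle is precisely the justification of this stochastic Fubini exchange: one must verify that the integrand (jointly in the path variable and in $(s,y)$) is adapted and square-integrable against the product of the path law and the white-noise covariance. This is where the moment estimates (\ref{mp}) and (\ref{mp2}) of Remark \ref{rem1}, together with the $L^2$ controls on $f$ in (A2) and on $u_0$ in (A1), enter: they bound $\|\psi(s,z)\|_p$ uniformly in $(s,z)$ and, combined with $\sigma_s(y)^2\le f(y)^2$ and the integrability of $G_{t-s}$, guarantee that $\mathbb{E}\int_0^t\int_{\mathbb{R}}\big(\mathrm{sign}(y)\sigma_s(y)\int_{\{z:(y,z)\in S\}}G_{t-s}(x-z)\psi(s,z)\,dz\big)^2dy\,ds<\infty$, so that both Walsh's stochastic Fubini and the ordinary Fubini invoked above are applicable. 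Assembling the three pieces gives (\ref{eq5}).
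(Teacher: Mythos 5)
Your proposal is correct and follows essentially the same route as the paper: It\^o's formula applied to $e^{-\frac12 M_t^\beta}$, multiplication by $\psi_0(\beta_0)$, and the Markov property of the backward Brownian motion at time $s$ to produce the factor $G_{t-s}(x-z)\psi(s,z)$ (your ``Chapman--Kolmogorov identity'' is exactly the conditioning step the paper carries out, and your identities $\mathbf{1}_{[0,\beta_s]}(y)=\mathrm{sign}(y)\mathbf{1}_S(y,\beta_s)$ and $|\mathbf{1}_{[0,\beta_s]}(y)|=\mathbf{1}_S(y,\beta_s)$ reproduce its case analysis on the sign of $y$). The only difference is that you spell out the stochastic Fubini justification that the paper leaves implicit by referring to \cite{burger3}.
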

\begin{proof} The proof of this result follows from the same arguments as  in \cite{burger3}. We  briefly explain the main idea.
First, observe that $\beta_0$ satisfies $\displaystyle \mathbb{E}_{x,t}^{\beta}\big(\psi_0(\beta_0)\big) = \int_{\mathbb{R}} G_t(x-y) \psi_0(y)dy$ since $y\mapsto G_t(x-y)$ is the   density of $\beta_0$. Now, apply It\^{o}'s formula to get
$$
e^{-\frac{1}{2} M_t^{\beta}}=1  -\frac{1}{2} \int_0^t \int_{\mathbb{R}} e^{-\frac{1}{2} M_s^{\beta}} \sigma_s(y)\mathbf{1}_{[0,\beta_s]}(y) W(ds,dy)+\frac{1}{8} \int_0^t \int_{\mathbb{R}} e^{-\frac{1}{2} M_s^{\beta}} \sigma_s(y)^2| \mathbf{1}_{[0,\beta_s]}(y)| dy ds.
$$
Multiply by $\psi_0(\beta_0)$ and take the expectation with respect to the BWBM to see that
\begin{align*}
\psi(t,x) = \int_{\mathbb{R}} G_t(x-y) \psi_0(y)dy -\frac{1}{2}\int_0^t \int_{\mathbb{R}}\sigma_s(y) \mathbb{E}_{x,t}^{\beta} \Big(\psi_0(\beta_0)e^{-\frac{1}{2}M_s^{\beta}}\mathbf{1}_{[0,\beta_s]}(y)\Big) W(ds,dy) \\+ \frac{1}{8}\int_0^t \int_{\mathbb{R}} \sigma_s(y)^2 \hspace{0.5mm} \mathbb{E}_{x,t}^{\beta} \Big(\psi_0(\beta_0)e^{-\frac{1}{2}M_s^{\beta}}|\mathbf{1}_{[0,\beta_s]}(y)|\Big) dy ds.
\end{align*}
Finally, apply the Markov property to get
\begin{align*}
\mathbb{E}_{x,t}^{\beta} \Big(\psi_0(\beta_0)e^{-\frac{1}{2}M_s^{\beta}}\mathbf{1}_{[0,\beta_s]}(y)\Big)
&= \mathbb{E}_{x,t}^{\beta}\Big[ \mathbb{E}\Big(\psi_0(\beta_0)e^{-\frac{1}{2}M_s^{\beta}}\mathbf{1}_{[0,\beta_s]}(y)\Big| \beta_r, s \leq r \leq t \Big)\Big] \\  
&=\mathbb{E}_{x,t}^{\beta}\Big[\mathbf{1}_{[0,\beta_s]}(y) \mathbb{E}\Big(\psi_0(\beta_0)e^{-\frac{1}{2}M_s^{\beta}}\Big| \beta_s \Big)\Big] \\
&= \int_0^{\infty}G_{t-s}(x-z) \mathbb{E}_{z,s}^{\beta}\Big( \psi_0(\beta_0) e^{-\frac{1}{2}M_s^{\beta}}\Big) \mathbf{1}_{[0,z]}(y)  dz \\
& \hspace{10mm}- \int_{-\infty}^0 G_{t-s}(x-z) \mathbb{E}_{z,s}^{\beta}\Big( \psi_0(\beta_0) e^{-\frac{1}{2}M_s^{\beta}}\Big) \mathbf{1}_{[z,0]}(y)  dz\\
&= 
\left\{ \begin{array}{c c }
\displaystyle \int_y^{\infty}G_{t-s}(x-z) \psi(s,z)dz & \text{if } y \geq 0 \vspace{2mm} \\
\displaystyle -\int_{-\infty}^yG_{t-s}(x-z) \psi(s,z)dz  & \text{if } y<0 .
\end{array} \right.
\end{align*}
Similarly,
\begin{align*}
\mathbb{E}_{x,t}^{\beta} \Big(\psi_0(\beta_0)e^{-\frac{1}{2}M_s^{\beta}}|\mathbf{1}_{[0,\beta_s]}(y)|\Big)
&= 
\left\{ \begin{array}{c c }
\displaystyle \int_y^{\infty}G_{t-s}(x-z) \psi(s,z)dz & \text{if } y \geq 0 \vspace{2mm} \\
\displaystyle \int_{-\infty}^yG_{t-s}(x-z) \psi(s,z)dz  & \text{if } y<0 .
\end{array} \right.
\end{align*}
Hence, we have the desired result.
\end{proof}
Next, we establish a H\"{o}lder regularity property for $\psi$.
\begin{proposition} \label{prop3.4}
For $p \geq 2$ and $T>0$, there exists some constant $C$, depending on $p$,  $T$,  $\|u_0\|_\infty$, $\|u_0\|_{L^1(\mathbb{R})}$, and $\| f\|_{L^2(\mathbb{R})}$, such that for all
 $s,t \in [0,T]$, and   $ x, y \in \mathbb{R}$,  
$$
\| \psi(t,x) - \psi(s,y)\|_p \leq C \left(|t-s|^{1/2}+ |x-y|^{1/2}\right).
$$
\end{proposition}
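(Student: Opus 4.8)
The plan is to control the space and time increments separately via the triangle inequality,
\[
\|\psi(t,x)-\psi(s,y)\|_p \le \|\psi(t,x)-\psi(t,y)\|_p + \|\psi(t,y)-\psi(s,y)\|_p,
\]
and to estimate each piece using the integral equation (\ref{eq5}). Writing $\psi = J_0 + J_1 + J_2$, where $J_0(t,x)=\int_{\mathbb R}G_t(x-z)\psi_0(z)\,dz$ is the deterministic term, $J_1$ is the Walsh stochastic integral, and $J_2$ is the drift (double) integral, I would bound the relevant increment of each $J_i$ by $C(|x-y|^{1/2}+|t-s|^{1/2})$. The recurring tools are: the Burkholder--Davis--Gundy inequality for $J_1$, Minkowski's integral inequality to move the $L^p$ norm inside the $dz\,d\eta\,ds$ integrals, the uniform moment bound $\sup_{s\le T,z}\|\psi(s,z)\|_p \le M_{p,T}<\infty$ from (\ref{mp}), the pointwise growth bound $|\sigma_s(\eta)|\le f(\eta)$ from (\ref{eq3}), and heat-kernel difference estimates, Lemma \ref{lem1} in particular.

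For the deterministic term, note that $\psi_0$ is bounded and Lipschitz with constants depending only on $\|u_0\|_{L^1(\mathbb R)}$ and $\|u_0\|_\infty$ (indeed $\psi_0'=-\tfrac12 u_0\psi_0$). Since $G_t$ is a probability density, convolution preserves the Lipschitz constant, so $|J_0(t,x)-J_0(t,y)|\le C|x-y|$; and using $\int_{\mathbb R}(G_t-G_s)(w)\,dw=0$ to insert $\psi_0(z)-\psi_0(x)$, the time increment is bounded by $C\int_{\mathbb R}|G_t(w)-G_s(w)|\,|w|\,dw\le C|t-s|$. Both are dominated by $C(|x-y|^{1/2}+|t-s|^{1/2})$ on $[0,T]$, while for large increments one instead uses that $J_0$ is bounded.

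For the stochastic term I would apply BDG to obtain
\[
\|J_1(t,x)-J_1(t,y)\|_p^2 \le C\Big\|\int_0^t\!\!\int_{\mathbb R}\Phi(s,\eta)^2\,d\eta\,ds\Big\|_{p/2},
\]
where $\Phi(s,\eta)$ is the Walsh integrand, carrying one factor $\sigma_s(\eta)$ and an inner $z$–integral of the kernel against $\psi$. Bounding $|\sigma_s(\eta)|\le f(\eta)$, applying Minkowski, and using $\|\psi(s,z)\|_p\le M_{p,T}$ reduces the inner integral to the $L^1$ norm of a heat-kernel difference. For the space increment this uses $\int_{\mathbb R}|G_r(\cdot-h)-G_r|\,dw\le C\min(1,|h|/\sqrt r)$, which after integrating $d\eta$ (giving $\|f\|_{L^2(\mathbb R)}^2$) and $ds$ yields a bound even stronger than $|x-y|^{1/2}$. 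For the time increment I would split off the new interval $[s,t]$, controlled by $\int_s^t\,dr$ and hence $\le C|t-s|^{1/2}$ after the square root, from the kernel-difference contribution on $[0,s]$; the latter is exactly where Lemma \ref{lem1} enters, with $\theta_1=1$, $\theta_2=0$, $\beta=2$, producing the bound $C|t-s|$ inside the square root and therefore the sharp exponent $1/2$. The drift term $J_2$ is treated by the same Minkowski/moment-bound scheme without BDG, using that for fixed $z$ one has $\int_{\{\eta:(\eta,z)\in S\}}f(\eta)^2\,d\eta\le \|f\|_{L^2(\mathbb R)}^2$.

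I expect the time increment of the stochastic (and drift) terms to be the main obstacle, for two reasons. First, one must carefully separate the two sources of $t$-dependence — the integration limit and the kernel $G_{t-s}$ — and only the second requires the delicate space-time estimate of Lemma \ref{lem1}; getting the exponents $(\theta_1,\theta_2,\beta)=(1,0,2)$ right is what recovers precisely the $1/2$ power rather than something weaker. Second, the analogous drift estimate corresponds to $\beta=1$, which sits at the boundary of the hypotheses of Lemma \ref{lem1} (the second inequality in (\ref{kk1}) degenerates to an equality); there I would instead estimate $\int_0^s\!\int_{\mathbb R}|G_{t-r}-G_{s-r}|\,dw\,dr$ directly, obtaining a bound of order $|t-s|\log(1/|t-s|)$, still $\le C|t-s|^{1/2}$ on $[0,T]$. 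Finally I would note that all space bounds are uniform in $t$ and all time bounds uniform in $x$, since integrating the kernel over all of space yields translation-invariant estimates, and that the large-increment regime is absorbed by the uniform moment bounds of Proposition \ref{prop1} and Remark \ref{rem1}; the resulting constant depends exactly on $p$, $T$, $\|u_0\|_\infty$, $\|u_0\|_{L^1(\mathbb R)}$ and $\|f\|_{L^2(\mathbb R)}$, as claimed.
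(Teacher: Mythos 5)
Your proposal is correct and uses the same basic toolbox as the paper (the integral equation \eqref{eq5}, Burkholder--Davis--Gundy, Minkowski, the uniform moment bound \eqref{mp}, the growth bound \eqref{eq3}, and Lemma \ref{lem1}), but it differs from the paper's proof in how the spatial increment is closed. The paper does \emph{not} bound the inner $z$-integral by the $L^1$ norm of the kernel difference; instead it changes variables to keep the increment $\psi(s,x_1-u)-\psi(s,x_2-u)$ inside, introduces $V_s=\sup_{|x-y|=\delta}\|\psi(s,x)-\psi(s,y)\|_p$, and closes a self-referential inequality $V_t^2\le C_1\delta+C_2\int_0^tV_s^2\,ds$ by Gronwall. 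Your direct route --- $\int_{\mathbb R}|G_r(w-h)-G_r(w)|\,dw\le C\min(1,|h|/\sqrt r)$ followed by integration in $s$ --- avoids Gronwall entirely and in fact yields the slightly stronger modulus $\delta\sqrt{\log(1/\delta)}$; the price is a harmless logarithm, the gain is a simpler, non-self-referential argument. This simplification propagates: in the time increment you never need to add and subtract $\psi(s,x)$ and invoke the just-proved spatial regularity (the paper's estimate \eqref{k3} with parameters $(\beta,\theta_1,\theta_2)=(2,1,1/2)$), since the term with $\theta_2=0$ alone already gives $C(t_2-t_1)$ inside the square root. Your treatment of the drift term's time increment is also a legitimate alternative to the paper's: you correctly spot that $\beta=1$, $\theta_1=1$, $\theta_2=0$ sits on the boundary of \eqref{kk1}, and your direct bound $\int_0^{t_1}\min(1,\tau/(t_1-s))\,ds\le C\tau\log(1/\tau)$ replaces the paper's H\"older-interpolation device with exponents $p_1>4$, $\epsilon=1/p_1$, which yields $(t_2-t_1)^{1/2+1/p_1}$; both are $\le C(t_2-t_1)^{1/2}$ on $[0,T]$.

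One small correction: the intermediate claim $\int_{\mathbb R}|G_t(w)-G_s(w)|\,|w|\,dw\le C|t-s|$ is not true uniformly on $[0,T]$ (take $s=0$: the integral equals $c\sqrt t$). The uniform bound is $C|t-s|^{1/2}$, obtained e.g.\ from $\int_s^t\int_{\mathbb R}|\partial^2_wG_r(w)|\,|w|\,dw\,dr\le C\int_s^t r^{-1/2}\,dr$, or as in the paper via the semigroup property. Since $|t-s|^{1/2}$ is exactly what the statement requires, this slip does not affect your conclusion.
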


\begin{proof} 
First we prove the H\"older continuity in the space variable.
 Let $x_1$ and $x_2$ be such that $|x_1-x_2| =\delta $. 
Because $\|\psi(t,x)\|_p$ is uniformly bounded on $[0,T]\times \mathbb{R}$, we can assume that $\delta \le 1$. 
We have
\begin{eqnarray*}
&&
 \psi(t,x_1) -\psi(t,x_2)\\
 && = \int_{\mathbb{R}} [G_t(x_1-y)- G_t(x_2-y) ] \psi_0(y) dy \\
&&  \quad  -\frac{1}{2} \int_0^t \int_{S} {\rm sign} (y)[G_{t-s}(x_1-z)- G_{t-s}(x_2-z)] \psi(s,z) \sigma_s(y) dz W(ds,dy)\\
&&   \quad + \frac{1}{8} \int_0^t \int_{S} [G_{t-s}(x_1-z)- G_{t-s} (x_2-z)] \psi(s,z) \sigma^2_s(y)  dz dyds\\
&& =: I_1(x_1,x_2) -\frac 12 I_2(x_1,x_2)+ \frac 18I_3(x_1,x_2).
\end{eqnarray*}
We make a change of variables to get
\[
|I_1(x_1,x_2)|\leq    \int_{\mathbb{R}} G_t(u) |\psi_0(x_1-u) - \psi_0(x_2-u)| du.
\]
By  Hypothesis (A1) the function $\psi_0$ has a bounded derivative:
\[
|\psi'(x)| \le \|u_0\|_\infty e^{\frac 12\|u_0\|_{L^1(\mathbb{R})}}.
\]
Therefore, it is Lipschitz and we obtain
\begin{equation} \label{z3}
|I_1(x_1,x_2)|\leq C|x_1-x_2| =C\delta.
\end{equation}
Consider the decomposition
\[
I_2(x_1, x_2)= I_{2,+} (x_1, x_2) + I_{2,-} (x_1,x_2),
\]
where 
\[
 I_{2,+} (x_1, x_2)= \int_0^t  \int_0 ^\infty  \sigma_s(y)\int_y ^\infty[G_{t-s}(x_1-z)- G_{t-s}(x_2-z)] \psi(s,z)  dz W(ds,dy)
 \]
 and
 \[
 I_{2,-} (x_1, x_2)= -\int_0^t  \int_{-\infty}^0  \sigma_s(y)\int_{-\infty} ^y[G_{t-s}(x_1-z)- G_{t-s}(x_2-z)] \psi(s,z)  dz W(ds,dy).
 \]
Applying Burkholder's and Minkowski's inequalities, we get
\begin{eqnarray}  \notag
&&  \|  I_{2,+}(x_1,x_2 ) \|_p  \notag  \\ \notag
& & \quad  \leq  c_p\left\| \int_0^t \int_0^{\infty} \sigma^2_s(y) \Big( \int_y^{\infty} \psi(s,z) \big[G_{t-s}(x_1-z)-G_{t-s}(x_2-z)\big]dz\Big)^2 dy ds \right\|_{p/2}^{1/2}\\   
&& \quad  \leq   c_p\left(\int_0^t \int_0^{\infty}  f^2(y)  \left\|  \int_y^{\infty} \psi(s,z) \big[G_{t-s}(x_1-z)-G_{t-s}(x_2-z)\big]dz \right\| _p^2 dy ds \right)^{1/2}. \label{z2}
\end{eqnarray}
Making a change of variables we can write
\begin{eqnarray*}
&&\int_y^{\infty} \psi(s,z) \big[G_{t-s}(x_1-z)-G_{t-s}(x_2-z)\big]dz \\
&& \quad =\int_{-\infty}^{x_1-y} \psi(s,x_1-u) G_{t-s}(u)du-   \int_{-\infty}^{x_2-y} \psi(s,x_2-u) G_{t-s}(u)du\\
&& \quad =\int_{-\infty}^{x_1-y} [\psi(s,x_1-u) -\psi(x_2-u)] G_{t-s}(u)du +  \int^{x_1-y}_{x_2-y} \psi(s,x_2-u) G_{t-s}(u)du.
\end{eqnarray*}
This leads to the estimate
\begin{eqnarray*}
&& \left\|\int_y^{\infty} \psi(s,z) \big[G_{t-s}(x_1-z)-G_{t-s}(x_2-z)\big]dz\right\|_p \\
&& \quad \le  \int_{\mathbb{R}} \|\psi(s,x_1-u) -\psi(s,x_2-u) \|_pG_{t-s}(u)du +  \int^{x_1-y}_{x_2-y}  \|\psi(s,x_2-u) \|_pG_{t-s}(u)du.
\end{eqnarray*}
Let $M_{p,T}$ be the constant introduced in (\ref{mp}) and set
\[
V_s:=\sup_{|x-y|= \delta}  \|\psi(s,x) -\psi(s,y) \|_p.
\]
 Then, by Cauchy-Schwarz inequality,
\begin{eqnarray}  \notag
\left\|\int_y^{\infty} \psi(s,z) \big[G_{t-s}(x_1-z)-G_{t-s}(x_2-z)\big]dz\right\|_p  &\le&  V_s+ M_{p,T} |x_1-x_2|^{1/2}  \left(\int_\mathbb{R} G^2_{t-s} (u)du \right)^{1/2}\\
&=&V_s+ M_{p,T}  \sqrt{\delta} [8(t-s)]^{-1/4}.  \label{z1}
\end{eqnarray}
Substituting the estimate (\ref{z1}) into  (\ref{z2}) yields
\begin{eqnarray}  \notag
 \|  I_{2,+}(x_1,x_2 ) \|_p^2   & \le&  2c_p^2 \|f\|^2_{ L^2(\mathbb{R})}  \int_0^t (V_s^2 +8^{-1/2} M_{p,T}^2 \delta (t-s)^{-1/2} )ds\\
 &\leq&  2c_p^2 \|f\|^2_{ L^2(\mathbb{R})}  \left( \int_0^t V_s^2 ds + \sqrt{ \frac T2}  M_{p,T}^2  \delta \right).  \label{z4}
 \end{eqnarray}
An analogous upper bound can be obtained for  $ \|  I_{2,-}(x_1,x_2 ) \|_p^2 $ 
in the same way. Similarly, decompose $I_3$ as
\[
I_3(x_1, x_2)= I_{3,+} (x_1, x_2) + I_{3,-} (x_1,x_2),
\]
where 
\[
 I_{3,+} (x_1, x_2)= \int_0^t  \int_0 ^\infty  \sigma^2_s(y)\int_y ^\infty[G_{t-s}(x_1-z)- G_{t-s}(x_2-z)] \psi(s,z)  dz dyds
 \]
 and
 \[
 I_{3,-} (x_1, x_2)= \int_0^t  \int_{-\infty}^0  \sigma^2_s(y)\int_{-\infty} ^y[G_{t-s}(x_1-z)- G_{t-s}(x_2-z)] \psi(s,z)  dz dyds.
 \]
By  Minkowsky inequality,
\[
 \| I_{3,+} (x_1, x_2)\|_p\le  \int_0^t  \int_0 ^\infty   f^2(y) \left\| \int_y ^\infty[G_{t-s}(x_1-z)- G_{t-s}(x_2-z)] \psi(s,z)  dz  \right \|_p dyds.
 \]
and the estimate (\ref{z1}) leads to
\begin{equation} \label{z5}
 \| I_{3,+} (x_1, x_2)\|_p\le \| f\|^2 _{L^2({\mathbb{R})}}  \left (  \int_0^t    V_s ds +  \frac 43  T^{3/4} M_{p,T} 8^{-1/4} \sqrt{\delta} \right).
  \end{equation}
We can derive an analogous estimate for  $ \| I_{3,-} (x_1, x_2)\|_p$. Finally, from  (\ref{z3}),  (\ref{z4}), (\ref{z5}), and the similar estimates for $I_{2,-}$ and $I_{3,-}$, we deduce
\[
V_t^2 \le C_1 \delta + C_2 \int_0^t  V_s^2 ds
\]
for some constants $C_1$ and $C_2$ depending on  $p$, $T$,  $\|u_0\|_\infty$, $\|u_0\|_{L^1(\mathbb{R})}$ and $\| f\|_{L^2(\mathbb{R})}$.
By Gronwall's lemma, $V_t\le  C  \sqrt{\delta}$, which implies the desired H\"older continuity in the space variable.

%%%%%%%%%%%%%
%%%%%%%%%%%%%
%%TIME REGULARITY%
%%%%%%%%%%%%%
%%%%%%%%%%%%%

  For time regularity, let $0 \leq t_1<t_2 \leq T$ and consider each of the  decomposition
\begin{align*}
\psi(t_2,x)-\psi(t_1,x) &= J_1(t_1,t_2)-\frac{1}{2} J_2(t_1,t_2)+\frac{1}{8} J_3(t_1,t_2),
\end{align*}
where
\begin{align*}
J_1(t_1,t_2) &=\int_{\mathbb{R}} \big( G_{t_2}(x-y)-G_{t_1}(x-y)\big)\psi_0(y) dy,
\end{align*}
\begin{eqnarray*}
J_2(t_1,t_2) &=&
\int_0^{t_2} \int_{S} {\rm sign} (y)G_{t_2-s}(x-z) \psi(s,z) \sigma_s(y) dz W(ds,dy)\\
&&-\int_0^{t_1} \int_{S} {\rm sign} (y)G_{t_1-s}(x-z) \psi(s,z) \sigma_s(y) dz W(ds,dy),
\end{eqnarray*}
and
\begin{eqnarray*}
J_3(t_1,t_2) &=&
\int_0^{t_2} \int_{S} [G_{t_2-s}(x-z)- G_{t-s} (x-z)] \psi(s,z) \sigma^2_s(y)  dz dyds\\
&&-
\int_0^{t_1} \int_{S} [G_{t_1-s}(x-z)- G_{t-s} (x-z)] \psi(s,z) \sigma^2_s(y)  dz dyds.
\end{eqnarray*}
 Apply the semigroup property and the Lipschitz property of $\psi_0$ to get
\begin{align*}
|J_1(t_1,t_2)|  &=   \Big|\int_{\mathbb{R}} G_{t_1}(x-y) \Big( \int_{\mathbb{R}} G_{t_2-t_1}(y-z) \big[\psi_0(z)-\psi_0(y)\big] dz \Big) dy \Big|\\
&\leq C \int_{\mathbb{R}} G_{t_1}(x-y) \Big( \int_{\mathbb{R}} G_{t_2-t_1}(y-z) |z-y| dz \Big) dy\\
&= C (t_2-t_1)^{1/2}.
\end{align*}
For the stochastic integral term, we again decompose $J_2$ as 
$$
J_2(t_1,t_2) = J_{2,+}(t_1,t_2)-J_{2,-}(t_1,t_2),
$$
where
\begin{align*}
 J_{2,+}(t_1,t_2) = &\int_0^{t_2} \int_0^{\infty}\int_y^{\infty} G_{t_2-s}(x-z) \psi(s,z) \sigma_s(y) W(ds,dy)\\&-\int_0^{t_1} \int_0^{\infty}\int_y^{\infty} G_{t_1-s}(x-z) \psi(s,z) \sigma_s(y) W(ds,dy)
\end{align*}
and
\begin{align*}
 J_{2,-}(t_1,t_2) = &\int_0^{t_2} \int_{-\infty}^0\int_{-\infty}^y G_{t_2-s}(x-z) \psi(s,z) \sigma_s(y) W(ds,dy)\\&-\int_0^{t_1} \int_{-\infty}^0 \int_{-\infty}^y G_{t_1-s}(x-z) \psi(s,z) \sigma_s(y) W(ds,dy).
\end{align*}
Splitting $J_{2,+}$ into two pieces, we can write
\begin{eqnarray*}
&& \|J_{2,+}(t_1,t_2)\|_p  \\
&& \quad \leq  \Big\| \int_0^{t_1} \int_0^{\infty} \sigma_s(y) \Big( \int_y^{\infty}  \psi(s,z) [G_{t_2-s}(x-z)-G_{t_1-s}(x-z)]dz \Big) W(ds,dy) \Big\|_p \\
&& \quad \quad + \Big\|\int_{t_1}^{t_2} \int_0^{\infty} \sigma_s(y) \Big( \int_y^{\infty} \psi(s,z) G_{t_2-s}(x-z)dz \Big) W(ds,dy)\Big\|_p\\
&&  \quad =:  A_1(t_1,t_2)  + A_2(t_1,t_2).
\end{eqnarray*}
Applying Burkholder's inequality and Minkowski's inequality, yields
\begin{align*}
A_1(t_1, t_2) & \leq c_p \bigg(\int_0^{t_1} \int_0^{\infty} f(y)^2 \Big\| \int_y^{\infty}  \psi(s,z) [G_{t_2-s}(x-z)-G_{t_1-s}(x-z)]dz \Big\|_p^2 dy ds\bigg)^{1/2}.
\end{align*}
Adding and subtracting $\psi(s,x)$ and  using the spatial regularity of $\psi$, we obtain
\begin{eqnarray}  \notag
&&  \Big\| \int_y^{\infty}  \psi(s,z) [G_{t_2-s}(x-z)-G_{t_1-s}(x-z)]dz \Big\|_p^2 \\ \notag
&& \quad \leq 
  2C \Big(\int_y^{\infty} |G_{t_2-s}(x-z)-G_{t_1-s}(x-z)|~|x-z|^{1/2} dz\Big)^2 \\  \label{k3}
&& \quad \quad +2\| \psi(s,x)\|_p^2 \Big(\int_y^{\infty} \big[G_{t_2-s}(x-z)-G_{t_1-s}(x-z)\big] dz\Big)^2  .
\end{eqnarray}
By Lemma \ref{lem1}, with $\beta =2$, $\theta_1=1$ and $\theta_2 =1/2$, yields
\begin{equation}  \label{k1}
\int_0^{t_1} \left(\int_y^{\infty} |G_{t_2-s}(x-z)-G_{t_1-s}(x-z)|~|x-z|^{1/2} dz\right)^2   ds \le C (t_2-t_2)^{3/2}.
\end{equation}
 Applying  Lemma \ref{lem1} again, with $\beta =2$, $\theta_1=1$ and $\theta_2 =0$, we obtain
 \begin{equation}  \label{k2}
 \int_0^{t_1} \left( \int_y^{\infty} \big[G_{t_2-s}(x-z)-G_{t_1-s}(x-z)\big] dz\right)^2 ds \leq   C(t_2-t_1).  
 \end{equation}
  Substituting (\ref{k1}) and (\ref{k2}) into  (\ref{k3}), we get
  \begin{align*}
A_1(t_1,t_2) & \le C (t_2-t_1) ^{1/2}.
   \end{align*}
We control the  term $A_2(t_1,t_2)$ using a rough estimate as follows
\begin{align*}
 A_2(t_1,t_2) 
&\leq c_p \bigg\| \int_{t_1}^{t_2} \int_0^{\infty} f(y)^2 \Big( \int_y^{\infty} \psi(s,z) G_{t_2-s}(x-z)dz \Big)^2 dy ds \bigg\|_{p/2}^{1/2}\\
&\leq c_p \|f\|_{L^2(\mathbb{R})} \bigg( \int_{t_1}^{t_2} \Big\| \int_{\mathbb{R} }\psi(s,z) G_{t_2-s}(x-z) dz \Big\|_p^2ds \bigg)^{1/2}\\
& \leq C (t_2-t_1)^{1/2}.
\end{align*}
  We can bound $J_{2,-}$ in the same way and  get
$$
\|J_2(t_1,t_2)\|_p \leq C (t_2-t_1)^{1/2}.
$$
Once again, we decompose $J_3$ as $J_3 = J_{3,+}+J_{3,-}$, where
\begin{align*}
J_{3,+}(t_1,t_2) &= \int_0^{t_2}\int_0^{\infty} \int_y^{\infty} G_{t_2-s}(s,z) \psi(s,z) \sigma_s^2(y) dz dy ds \\
&\hspace{10mm} -  \int_0^{t_1} \int_0^{\infty} \int_y^{\infty} G_{t_1-s}(s,z)\psi(s,z) \sigma_s^2(y) dz dy ds
\end{align*}
and
\begin{align*}
J_{3,-}(t_1,t_2) &= \int_0^{t_2}\int_{-\infty}^0 \int_{-\infty}^y G_{t_2-s}(s,z) \psi(s,z) \sigma_s^2(y) dz dy ds \\
&\hspace{10mm} -  \int_0^{t_1} \int_{-\infty}^0 \int_{-\infty}^y  G_{t_1-s}(s,z)\psi(s,z) \sigma_s^2(y) dz dy ds.
\end{align*}
We control $J_{3,+}$ in the same way as $J_{2,+}$ to get
\begin{align*}
\| J_{3,+}(t_1,t_2)\|_p  &\leq \Big\| \int_0^{t_1} \int_0^{\infty} \sigma_s(y)^2 \int_y^{\infty}  \psi(s,z) [G_{t_2-s}(x-z)-G_{t_1-s}(x-z)]dz dy ds \Big\|_p \\
&\hspace{10mm}+ \Big\|\int_{t_1}^{t_2} \int_0^{\infty} \sigma_s(y)^2 \int_y^{\infty} \psi(s,z) G_{t_2-s}(x-z)dz  dy ds \Big\|_p.
\end{align*}
We bound the second term roughly as
\begin{align*}
\Big\|\int_{t_1}^{t_2} \int_0^{\infty} \sigma_s(y)^2 \int_y^{\infty} \psi(s,z) G_{t_2-s}(x-z)dz  dy ds \Big\|_p &\leq C \int_{t_1}^{t_2} \int_{\mathbb{R}}  G_{t_2-s}(x-z) dz  ds\\
&= C (t_2-t_1).
\end{align*}
Then, notice that for any $\epsilon \in (0,1)$ the first term can be bounded as
 \begin{align*}
 \int_0^{t_1} &\int_{\RR} | G_{t_2-s}(x-z)- G_{t_1-s} (x-z)| dz ds \\
 &
\le C \int_0^{t_1} \Big(\int_{\mathbb{R}} |G_{t_2-s}(x)-G_{t_1-s}(x)|^{p_1(1-\epsilon)}dx\Big)^{1/p_1} \\
&\hspace{10mm} \times \Big[\Big(\int_{\RR}G_{t_2-s}(x)^{p_2\epsilon}dx\Big)^{1/p_2}+\Big(\int_{\RR}G_{t_1-s}(x)^{p_2\epsilon}dx\Big)^{1/p_2} \Big]ds\\
&\leq C \int_0^{t_1} \Big(\int_{\mathbb{R}} |G_{t_2-s}(x)-G_{t_1-s}(x)|^{p_1(1-\epsilon)}dx\Big)^{1/p_1}, 
\end{align*}
for any H\"older conjugates $p_1,p_2$. Notice that if $\beta = 1/p_1$, $\theta_1 = p_1(1-\epsilon)$, and $\theta_2 = 0$, then condition (\ref{kk1}) is satisfied when, for example, $\epsilon=1/p_1$ and $p_1>4$. Hence, using Lemma \ref{lem1} with these parameters yields
 $$
  \int_0^{t_1} \int_{\RR} | G_{t_2-s}(x-z)- G_{t_1-s} (x-z)| dz ds \leq C(t_2-t_1)^{1/2+1/p_1}.
 $$
%  \begin{align*}
%\Big \| \int_0^{t_1} \int_0^{\infty} \sigma_s(y)^2 &\int_y^{\infty}  \psi(s,z) [G_{t_2-s}(x-z)-G_{t_1-s}(x-z)]dz dy ds  \Big\|_ p \\ 
%&\leq C \int_0^{t_1} \int_{\mathbb{R}} |G_{t_2-s}(x-z)-G_{t_1-s}(x-z)|dz ds \\
%&\leq C (t_2-t_1)^{1/2}.
%\end{align*}
 Control $J_{3,-}$ in an identical way to obtain
$$
\|J_3(t_1,t_2)\|_p \leq C (t_2-t_1)^{1/2}.
$$
Combining the above estimates yields
$$
\|\psi(t_2,x)-\psi(t_1,x)\|_p \leq C (t_2-t_1)^{1/2}.
$$
\end{proof}

Next we use the established H\"older regularity of the process $\psi$ to study its spatial differentiability. 

\begin{proposition}
The process $\psi(t,\cdot)$ is differentiable in $L^p(\Omega)$ for any $p \geq 2$ and satisfies 
\begin{align} \notag
\frac{\partial \psi}{\partial x}(t,x) = \int_{\mathbb{R}} \frac{\partial G_t }{\partial x}(x-y) \psi_0(y) dy -\frac{1}{2}\int_0^t \int_S {\rm sign}(y) \frac{\partial G_{t-s}}{\partial x} (x-z) \psi(s,z)  \sigma_s(y) dz W(ds,dy) \\+ \frac{1}{8}\int_0^t \int_S  \frac{\partial G_{t-s}}{\partial x} (x-z) \psi(s,z) \sigma_s(y)^2 dz dy ds.
\label{kk9}
\end{align} 
\end{proposition}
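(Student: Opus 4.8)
The plan is to differentiate the integral equation (\ref{eq5}) term by term and verify convergence of the difference quotients in $L^p(\Omega)$. Write $D_h\psi(t,x) = h^{-1}[\psi(t,x+h)-\psi(t,x)]$ and let $\Phi(t,x)$ denote the right-hand side of (\ref{kk9}). I would first check that $\Phi$ is well defined as an element of $L^p(\Omega)$: the deterministic term is controlled because $\psi_0$ is bounded and $\partial_x G_t \in L^1(\mathbb{R})$; for the stochastic and Lebesgue terms I would apply Burkholder's and Minkowski's inequalities exactly as in the proof of Proposition \ref{prop3.4}, reducing matters to the inner quantity $\int_y^\infty \partial_x G_{t-s}(x-z)\psi(s,z)\,dz$. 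Adding and subtracting $\psi(s,x)$ and using the identity $\int_y^\infty \partial_x G_{t-s}(x-z)\,dz = G_{t-s}(x-y)$, this splits into a ``H\"older part'' $\int_y^\infty \partial_x G_{t-s}(x-z)[\psi(s,z)-\psi(s,x)]\,dz$ and a ``boundary part'' $\psi(s,x)G_{t-s}(x-y)$.

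For the H\"older part I would invoke the spatial regularity from Proposition \ref{prop3.4}, $\|\psi(s,z)-\psi(s,x)\|_p \le C|z-x|^{1/2}$, together with the scaling identity $\int_{\mathbb{R}}|\partial_x G_{t-s}(w)|\,|w|^{1/2}\,dw = C(t-s)^{-1/4}$, which is square-integrable in $s$ on $[0,t]$. The boundary part appears more singular, since $\int_{\mathbb{R}}f^2(y)G_{t-s}(x-y)^2\,dy$ carries a factor $(t-s)^{-1}$; however, after integrating in time the exponential decay of the Gaussian turns this into only a logarithmic singularity in the spatial variable, so that $\int_0^t\int_{\mathbb{R}}f^2(y)G_{t-s}(x-y)^2\,dy\,ds<\infty$. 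Combined with the uniform moment bound (\ref{mp}), the growth bound $|\sigma_s|\le f$, and the integrability $f\in L^2(\mathbb{R})\cap L^q(\mathbb{R})$ with $q>2$ (which is precisely what renders the logarithmically singular boundary term integrable in space), this shows all three terms defining $\Phi$ are finite in $L^p(\Omega)$.

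The convergence $D_h\psi \to \Phi$ in $L^p$ is then handled term by term. For the deterministic term I would use dominated convergence and the smoothness of the heat kernel. For the stochastic and Lebesgue terms I would write, via the fundamental theorem of calculus, $h^{-1}[G_{t-s}(x+h-z)-G_{t-s}(x-z)] - \partial_x G_{t-s}(x-z) = h^{-1}\int_0^h[\partial_x G_{t-s}(x+\xi-z) - \partial_x G_{t-s}(x-z)]\,d\xi$, apply Burkholder and Minkowski as before, and carry out the same add-and-subtract decomposition. The resulting integrands vanish pointwise as $h\to 0$ by continuity of $\partial_x G_{t-s}$, and, for $|h|\le 1$, are dominated by the same $(t-s)^{-1/4}$ and logarithmically singular expressions used in the first step; dominated convergence then gives $\|D_h\psi(t,x)-\Phi(t,x)\|_p\to 0$.

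The main obstacle is the interplay between the singularity of $\partial_x G_{t-s}$ as $s\to t$ and the need to pass the limit $h\to 0$ inside the Walsh integral. Two mechanisms make this work: first, the $\tfrac12$-H\"older regularity of $\psi$ supplies the weight $|x-z|^{1/2}$ that tames $|\partial_x G_{t-s}|$ into the integrable blow-up $(t-s)^{-1/4}$; and second, the apparently non-integrable boundary contribution is in fact only logarithmically singular after integration in $s$. Producing $h$-uniform dominating functions so that dominated convergence applies to the stochastic integral is the delicate point; everything else is a repetition of the estimates already established for Proposition \ref{prop3.4}.
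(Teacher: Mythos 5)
Your proposal is correct and follows essentially the same route as the paper: term-by-term differentiation, Burkholder/Minkowski, the add-and-subtract of $\psi(s,x)$ splitting the inner integral into a H\"older part (tamed by the $1/2$-H\"older spatial regularity of $\psi$ and the $(t-s)^{-1/4}$ scaling of $\int|\partial_x G_{t-s}(w)||w|^{1/2}dw$) and a boundary part $\psi(s,x)G_{t-s}(x-y)$ whose integrability hinges on $f\in L^q(\mathbb{R})$ with $q>2$, followed by dominated convergence with $h$-uniform dominating functions. The only differences are cosmetic: you integrate the boundary term in time first (obtaining a logarithmic spatial singularity) where the paper applies H\"older in space first to get $(t-s)^{-1+1/(2p_1)}$, and you justify the pointwise vanishing of the difference-quotient error by continuity of $\partial_x G$ where the paper derives an explicit $C_s|h|^2$ bound via the mean value theorem.
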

\begin{proof}
It is clear that the spatial derivative of the first integral in the expression of $\psi$ equals the first integral above by Leibniz's rule.

To take care of the stochastic integral term, by the Burkholder-Davis-Gundy inequality and the symmetry of $S$, it suffices to show the  convergence to zero
in $L^{p/2}(\Omega)$, as $h$ tends to zero, of  the term
$$
 I_h(t,x):=\int_0^t \int_0^{\infty} \Big(\int_y^{\infty}  \Delta_h G_{t-s}(x-z) \psi(s,z) dz \Big)^2 \sigma_s(y)^2 dy ~ ds,
$$
where
\[
 \Delta_h G_{t-s}(x-z) := \frac{G_{t-s}(x+h-z)-G_{t-s}(x-z)}{h}-\frac{\partial G_{t-s}}{\partial x}(x-z).
 \]
By Minkowski's inequality,  we obtain
\[
\|I_h(t,x)\|_{p/2} \le \int_0^t \int_0^{\infty} \left \|\int_y^{\infty}   \Delta_h G_{t-s}(x-z) \psi(s,z) dz \right\|_p^2 f(y)^2 dy ds.\\
\]
We show first the convergence to zero of  
\[
I_{h}(t,x,s):= \int_0^{\infty} \left\|\int_y^{\infty}   \Delta_h G_{t-s}(x-z) \psi(s,z) dz \right\|_p^2 f(y)^2 dy 
\]
 as $h$ tends to zero, for each fixed $s\in [0,t)$. Rough estimates of $I_h(t,x,s)$ lead to
 $$
 I_h(t,x,s) \leq \sup_{t,x} \|\psi(t,x)\|_p^2 \|f\|_{L^2(\mathbb{R})}^2 \left( \int_{\mathbb{R}}| \Delta_h G_{t-s}(z)| dz\right )^2.
 $$
Apply the mean value theorem twice to see that 
$$
 \Delta_h G_{t-s}(z) = \frac{1}{h} \int_0^h \int_0^u \frac{\partial^2 G_{t-s}}{\partial x^2}(z+\eta) d \eta du .
$$
Finally, by applying Fubini's theorem, we obtain
$$
I_{h}(t,x,s) \leq \ C_{s} |h|^2.
$$
 Hence, we have that, for each $s\in [0,t)$,
$
I_{h} (t,x,s) \to 0
$
as $h\to 0$. By the dominated convergence theorem, it now suffices to show that $ I_{h} (t,x,s)$ is bounded by a $ds$-integrable function which is independent of $h$. Again, by the mean  value theorem, we can write
$$
I_h(t,x,s)= \int_0^{\infty}  \left\| \int_y^{\infty} \frac{1}{h} \int_0^h \left( \frac{\partial G_{t-s}}{\partial x}(x+\xi-z)-\frac{\partial G_{t-s}}{\partial x}(x-z)\right) d\xi ~ \psi(s,z) dz
\right \|_p^2  f(y)^2 dy.
$$
We split up this quantity by adding and subtracting appropriate terms as follows  
$$
 I_h(t,x,s)=\int_0^{\infty} \left\| \int_y^{\infty}\big[\phi_1(s,x,z,h) +\phi_2(s,x,z,h)\big] dz \right\|_p^2  f(y)^2 dy,
$$
where
\begin{align*}
\phi_1(s,x,z,h) &:= \frac{1}{h} \int_0^h \frac{\partial G_{t-s}}{\partial x}(x+\xi-z)\left[\psi(s,z)-\psi(s,x+\xi)\right]d\xi\\
 \\&  \quad  - \frac{\partial G_{t-s}}{\partial x}(x-z) \left[\psi(s,z)-\psi(s,x)\right] 
\intertext{and}
\phi_2(s,x,z,h) &:= \frac{1}{h} \int_0^h \left[\frac{\partial G_{t-s}}{\partial x}(x+\xi-z)\psi(s,x+\xi)  - \frac{\partial G_{t-s}}{\partial x}(x-z) \psi(s,x)\right] d\xi.
\end{align*}
Let us first consider the two terms of $\phi_1$, one at a time.  For the first one, we can write, using Minkowski inequality and  the H\"older continuity in $L^p(\Omega)$ of $\psi$
\begin{align*}
 \int_0^{\infty} \Big\|& \int_y^{\infty} \frac{1}{h} \int_0^h \frac{\partial G_{t-s}}{\partial x}(x+\xi-z)\big[\psi(s,z)-\psi(s,x+\xi)\big]d\xi ~ dz \Big\|_p^2 f(y)^2 dy \\
%& \leq  C \frac{1}{h^2}\int_0^{\infty} \Big( \int_y^{\infty}  \int_0^h\Big| \frac{\partial G_{t-s}}{\partial x}(x+\xi-z)\Big| |x+\xi-z|^{1/2} d\xi ~ dz\Big)^2 f(y)^2 dy\\
& \leq C  \| f\|^2_{L^2(\mathbb{R})} \frac{1}{h^2} \Big(  \int_0^h \int_{\mathbb{R}} \Big| \frac{\partial G_{t-s}}{\partial x}(x+\xi-z)\Big| |x+\xi-z|^{1/2}  dz~ d\xi \Big)^2\\
%& = C \| f\|^2_{L^2(\mathbb{R})}  \frac 1 {h^2}  \Big( (t-s)^{-1/4}  \int_0^h 1 ~d\xi \Big)^2\\
&= C \| f\|^2_{L^2(\mathbb{R})} (t-s)^{-1/2},
\end{align*}
which is $ds$-integrable.
Now, to see that the second term  is also bounded by a $ds$-integrable function not depending on $h$, we bound in the same way to get
\begin{align*}
 \int_0^{\infty} \Big\|& \int_y^{\infty}  \frac{\partial G_{t-s}}{\partial x}(x-z)\big[\psi(s,z)-\psi(s,x)\big] dz \Big\|_p^2 f(y)^2 dy \\
 &\leq C \int_0^{\infty} \Big( \int_y^{\infty}  \Big|\frac{\partial G_{t-s}}{\partial x}(x-z)\Big| |z-x|^{1/2}  dz \Big)^2 f(y)^2 dy\\
 &\leq C  \| f\|^2_{L^2(\mathbb{R})}(t-s)^{-1/2}.
\end{align*}
Let us now control the term  $\phi_2$ by first interchanging the $d\xi$ and $dz$ integrals to get 
\begin{align*}
 \int_0^{\infty} &\Big\| \int_y^{\infty} \phi_2(s,x,z,h) dz \Big\|_p^2  f(y)^2 dy  \\
 &= \int_0^{\infty} \Big\|  \frac{1}{h}\int_0^h \Big[G_{t-s}(x+\xi-y)\psi(s,x+\xi) - G_{t-s}(x-y)\psi(s,x)\Big] d\xi  \Big\|_p^2  f(y)^2 dy .
 \intertext{Now, add and subtract $G_{t-s}(x+\xi-y)\psi(s,x)$ to get}
 \int_0^{\infty} &\Big\| \int_y^{\infty} \phi_2(s,x,z,h) dz \Big\|_p^2  f(y)^2 dy\\
 &\leq 2\int_0^{\infty} \Big\| \psi(s,x) \frac{1}{h}\int_0^h \Big[G_{t-s}(x+\xi-y) -G_{t-s}(x-y)\Big] d\xi  \Big\|_p^2  f(y)^2 dy  \\
 &\hspace{10mm} + 2\int_0^{\infty} \Big\| \frac{1}{h}\int_0^h G_{t-s}(x+\xi-y)\Big[\psi(s,x+\xi)-\psi(s,x)\Big] d\xi  \Big\|_p^2  f(y)^2 dy \\
 & =: J_{1,h} +J_{2,h}.
\end{align*}
The second term can easily be bounded as follows
$$
J_{2,h} \le C\int_{\mathbb{R}} f(y)^2 \Big| \frac{1}{h} \int_0^h G_{t-s}(x+\xi-y) |\xi|^{1/2} d\xi \Big|^2 dy.
$$
We now use the assumption  $f \in L^q(\mathbb{R})$ for some $q > 2$ and choose $p_1$ such that $\frac 1{p_1} + \frac 2 q =1$. 
Then, by H\"older's inequality, we can write
$$
J_{2,h} \le C \|f \|_{L^{q}(\mathbb{R})}^2 \Big\| \frac{1}{h} \int_0^h G_{t-s}(x+\xi-\cdot ) |\xi|^{1/2} d\xi \Big\|_{L^{2p_1}(\mathbb{R})}^2.
$$
 Now, by Minkowski's inequality, we have
$$
\Big\| \frac{1}{h} \int_0^h G_{t-s}(x+\xi-\cdot ) |\xi|^{1/2} d\xi \Big\|_{L^{2p_1}(\mathbb{R})}^2 \leq 
 C (t-s)^{-1+1/2p_1}\Big( \frac{1}{h} \int_0^h  |\xi|^{1/2} d\xi \Big)^2,
$$
which is $ds$-integrable and independent of $h$ since we can assume  $|h|\leq 1$ without loss of generality. Finally, to control $J_{1,h}$, we proceed by again choosing the same value of $p_1$:
\begin{align*}
 J_{1,h} 
 &\leq C  \|f\|_{L^{q}(\mathbb{R})}^2 \Big(\frac{1}{h}\int_0^h \big\|G_{t-s}(x+\xi-\cdot) -G_{t-s}(x,\cdot)\big\|_{L^{2p_1}(\mathbb{R})} d\xi\Big)^2  \\
  &\leq C  \|f\|_{L^{q}(\mathbb{R})}^2 (t-s)^{-1+1/(2p_1)},
 \end{align*}
which is $ds$-integrable.

For the third integral in the expression of $\partial_x \psi$, we use an identical argument to obtain pointwise convergence to zero. Furthermore, it is easy to bound the $ds$ integrand by an integrable function which is independent of $h$ since
\begin{eqnarray*}
&& \Big\| \int_0^{\infty} \int_y^{\infty} \Delta_h G_{t-s}(x-z) \psi(s,z) \sigma_s(y)^2 dz dy  \Big\|_p \leq  C \|f\|_{L^2(\mathbb{R})}^2 \int_{\mathbb{R}} |\Delta_h G_{t-s}(x-z) | dz \\
&& \qquad\leq C \int_{\mathbb{R}} \Big(\Big|\frac{\partial G_{t-s}}{\partial x}(x+\xi-z) \Big| + \Big|\frac{\partial G_{t-s}}{\partial x}(x-z) \Big| \Big)dz\\
&&\qquad = C (t-s)^{-1/2},
\end{eqnarray*}
where the second inequality follows from the mean value theorem and triangle  inequality.

\end{proof}
In order to obtain a continuity result for the derivative process given above, we first establish uniform moment bounds.
\begin{proposition} \label{prop3.6}
For all integers $p \geq 2$, we have for any $t\ge 0$,
$$
\sup_{x \in  \mathbb{R}} \Big\|\frac{\partial \psi}{\partial x}(t,x) \Big\|_p  \le 
 K c_p  (t\vee 1)^{1-1/q}  \exp\Big( c_p  \| f\|_{L^2(\RR)}^2t  +\frac{1}{2} \|f\|_{L^2(\mathbb{R})}^4 t^2  \Big),
$$
%$$
%\sup_{x \in  \mathbb{R}} \Big\|\frac{\partial \psi}{\partial x}(t,x) \Big\|_p  \le 
% K  c_p  (t\vee 1)^{\frac 34- \frac 1{4q}}  \exp\big(   \| f\|_{L^2(\RR)}^2  (2(t+1)c_p +  t^2 +  tp/8) \big),
%$$
where $c_p$ is the optimal constant in Burkholder's inequality and $K$ is a constant depending on $p$,
 $q$, $\|f\| _{L^q(\RR)} $,  $\|f\| _{L^2(\RR)}$ , $ \|u_0\|_{L^1(\RR)}$ and  $\|u_0\|_\infty$. 
\end{proposition}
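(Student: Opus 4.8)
The plan is to estimate the three terms of the mild representation (\ref{kk9}) for $\partial_x\psi$ in $L^p(\Omega)$ separately and uniformly in $x\in\RR$, and then to close the resulting bound with a Gronwall argument. For the first (deterministic) term I would integrate by parts, using $\partial_x G_t(x-y)=-\partial_y G_t(x-y)$ to move the derivative onto $\psi_0$; since $\psi_0'=-\frac12 u_0\psi_0$ is bounded by $\frac12\|u_0\|_\infty e^{\frac12\|u_0\|_{L^1(\RR)}}$, this term is bounded by a constant independent of $t$ and $x$. The difficulty lies in the stochastic and drift terms, which involve the kernel $\partial_x G_{t-s}$: because $\int_\RR|\partial_x G_\tau(w)|\,dw\sim\tau^{-1/2}$, its square is not integrable in time, so a direct application of Burkholder's inequality diverges.

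To get around this I would integrate by parts on the half-lines appearing in the set $S$. Over $[y,\infty)$ (with the symmetric computation over $(-\infty,y]$) one has, using $\int_y^\infty\partial_x G_{t-s}(x-z)\,dz=G_{t-s}(x-y)$,
$$
\int_y^\infty\partial_x G_{t-s}(x-z)\psi(s,z)\,dz=G_{t-s}(x-y)\psi(s,y)+\int_y^\infty G_{t-s}(x-z)\,\partial_z\psi(s,z)\,dz,
$$
trading the singular kernel $\partial_x G_{t-s}$ for the integrable kernel $G_{t-s}$, at the price of a boundary term and a term involving $\partial_z\psi$ itself. Writing $N_p(t):=\sup_x\|\partial_x\psi(t,x)\|_p$, the boundary term is controlled using $\sup_{s,y}\|\psi(s,y)\|_p<\infty$ from Remark \ref{rem1}, while the last term satisfies $\|\int_y^\infty G_{t-s}(x-z)\partial_z\psi(s,z)\,dz\|_p\le N_p(s)$ since $G_{t-s}$ has unit mass; this is what produces a self-referential (Gronwall) structure.

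For the stochastic term, Burkholder's and Minkowski's inequalities give a bound by $c_p^2\int_0^t\int_\RR f(y)^2\|\int_y^\infty\partial_x G_{t-s}(x-z)\psi(s,z)\,dz\|_p^2\,dy\,ds$, into which I would substitute the decomposition above. The boundary contribution leads to $\int_0^t\int_\RR f(y)^2 G_{t-s}(x-y)^2\,dy\,ds$, which I would control by Hölder's inequality using $f\in L^q(\RR)$: $\int_\RR f^2 G_{t-s}^2\le\|f\|_{L^q(\RR)}^2\|G_{t-s}\|_{L^r(\RR)}^2=C(t-s)^{-1/2-1/q}$ with $r=2q/(q-2)$, which is integrable in $s$ precisely because $q>2$ and yields a power $t^{1/2-1/q}$; the remaining contribution is $c_p^2\|f\|_{L^2(\RR)}^2\int_0^t N_p(s)^2\,ds$. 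The drift term is treated identically with Minkowski's inequality, its boundary part producing $\int_\RR f^2 G_{t-s}\le C\|f\|_{L^q(\RR)}^2(t-s)^{-1/q}$ (again integrable in $s$, giving $t^{1-1/q}$) and a first-order self-reference $\|f\|_{L^2(\RR)}^2\int_0^t N_p(s)\,ds$.

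Setting $\Phi(t):=N_p(t)^2$, I expect to arrive at an inequality of the form
$$
\Phi(t)\le a(t)+(\alpha+\gamma t)\int_0^t\Phi(s)\,ds,
$$
where $a(t)\le(Kc_p)^2(t\vee1)^{2(1-1/q)}$ collects the non-self-referential terms (bounded via the $q>2$ Hölder estimates together with the moment bounds of Proposition \ref{prop1} and Remark \ref{rem1}), $\alpha$ is of order $c_p^2\|f\|_{L^2(\RR)}^2$ from the stochastic self-reference, and $\gamma$ is of order $\|f\|_{L^2(\RR)}^4$, arising when the first-order drift self-reference $\int_0^t N_p(s)\,ds$ is squared via Cauchy-Schwarz into $t\int_0^t\Phi(s)\,ds$. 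The Gronwall lemma recalled in the proof of Proposition \ref{prop1} then gives $\Phi(t)\le a(t)\exp(\alpha t+\gamma t^2)$, and taking square roots yields the claimed bound. The hard part is twofold: carrying out the integration by parts and the self-referential estimate uniformly in $x$ on the unbounded line, for which the spatial regularity of Proposition \ref{prop3.4} and the uniform moment bounds of Remark \ref{rem1} are essential; and the bookkeeping that produces the coefficient growing linearly in $t$, since it is exactly this $t$-dependence — traceable to the drift self-reference — that generates the quadratic $\frac12\|f\|_{L^2(\RR)}^4 t^2$ in the exponent, whereas the optimal Burkholder constant $c_p$ enters only through the stochastic term.
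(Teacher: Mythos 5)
Your proposal matches the paper's proof in all essentials: the same decomposition of \eqref{kk9}, the same integration by parts on the half-lines of $S$ to trade $\partial_x G_{t-s}$ for a boundary term $G_{t-s}(x-y)\psi(s,y)$ plus a self-referential term in $\partial_z\psi$, the same H\"older estimate with $f\in L^q(\RR)$ and $q_1=q/(q-2)$ to make the boundary contributions time-integrable, and the same Gronwall inequality with a coefficient growing linearly in $t$ (from squaring the drift self-reference via Cauchy--Schwarz), which produces the $\|f\|_{L^2(\RR)}^4 t^2$ term in the exponent. The only minor discrepancy is that you invoke the spatial H\"older regularity of Proposition \ref{prop3.4} as essential, whereas the argument only needs the uniform moment bounds of Remark \ref{rem1}; this does not affect correctness.
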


\begin{proof}
From the integral equation (\ref{kk9}) satisfied by $\frac{\partial \psi}{\partial x}(t,x)$, we get the decomposition
\begin{equation} \label{kk11}
\frac{\partial \psi}{\partial x}(t,x) = \mathcal{I}_1(t,x)- \mathcal{I}_2(t,x)+\mathcal{I}_3(t,x),
\end{equation}
where
\[
\mathcal{I}_1(t,x)= \int_{\mathbb{R}} \frac{\partial G_t }{\partial x}(x-y) \psi_0(y) dy
\]
\[
\mathcal{I}_2(t,x)=\frac{1}{2}\int_0^t \int_{S}{\rm sign} (y) \frac{\partial G_{t-s}}{\partial x} (x-z) \psi(s,z)  \sigma_s(y) dz W(ds,dy)
\]
and
\[
\mathcal{I}_3(t,x)= \frac{1}{8}\int_0^t \int_{S} \frac{\partial G_{t-s}}{\partial x} (x-z) \psi(s,z) \sigma_s(y)^2 dz dy ds.
\]
First observe that integrating by parts yields
$$
| \mathcal{ I}_1(t,x)| \leq \int_{\mathbb{R}} G_t(x-y) \bigg|\frac{\partial \psi_0}{\partial y}(y)\bigg| dy \leq \frac{1}{2} \|u_0\|_{\infty} e^{\frac{1}{2}\|u_0\|_{L^1(\mathbb{R})}}  .
$$ 
Now, decompose $\mathcal{I}_2$ as $\mathcal{I}_2(t,x) = \mathcal{I}_{2,+}(t,x)+\mathcal{I}_{2,-}(t,x)$, where
\begin{equation} \label{kl1}
\mathcal{I}_{2,+}(t,x) = \int_0^t \int_0^{\infty}  \int_y^{\infty} \frac{\partial G_{t-s}}{\partial x}(x-z) \psi(s,z)\sigma_s(y) dz W(ds,dy)
\end{equation}
and
\begin{equation} \label{kl2}
\mathcal{I}_{2,-}(t,x) =- \int_0^t \int_{-\infty}^{0}  \int_{-\infty}^y \frac{\partial G_{t-s}}{\partial x}(x-z) \psi(s,z)\sigma_s(y) dz W(ds,dy).
\end{equation}
Using Burkholder's and Minkowski's inequalities, we get 
\begin{align*}
\| \mathcal{I}_{2,+}(t,x) \|_p^2 &\leq c_p \int_0^t \int_0^{\infty} \Big\| \int_y^{\infty} \frac{\partial G_{t-s}}{\partial x}(x-z) \psi(s,z) dz \Big\|_p^2 f(y)^2 dy ds.
\end{align*}
Integrate by parts, use the triangle inequality, and the uniform bounds on moments of $\psi$ to obtain
\begin{align*}
\Big\| \int_y^{\infty} \frac{\partial G_{t-s}}{\partial x}(x-z) \psi(s,z) dz \Big\|_p^2 &\leq 2 M_{p,t}^2 G_{t-s}^2(x-y) \\
&+ 2\Big( \int_{\mathbb{R}} G_{t-s}(x-z) \Big\|\frac{\partial \psi}{\partial z}(s,z) \Big\|_p dz \Big)^2,
\end{align*}
where $M_{p,t} = \sup_{x\in \RR} \| \psi(t,x)\|_p$.
By H\"{o}lder's inequality,  if $\frac 1{ q_1} + \frac 2q =1$, then
\begin{align*}
\int_{\mathbb{R}} f(y)^2 G_{t-s}^2(x-y) dy &\leq \Big( \int_{\mathbb{R}} G_{t-s}(x-y)^{2q_1} dy \Big)^{1/q_1} \|  f\| ^2_{L^q(\mathbb{R})}\\
&= k  \| f\| ^2_{L^q(\mathbb{R})}   (t-s)^{-1+1/(2q_1)},
\end{align*}
where $k$ is a universal constant.
Let $$U_t := \sup_{x\in \mathbb{R}} \Big\|\frac{\partial \psi}{\partial x}(t,x) \Big\|_p^2 .$$
The above estimates yield
\begin{align*}
\| \mathcal{I}_{2,+}(t,x) \|_p^2 &\leq  \frac{ 4q}{q-2}c_p k \| f\| ^2_{L^q(\mathbb{R})} M_{p,t}^2 t^{\frac 12 -\frac 1q} +2c_p\|f\|^2_{L^2(\RR)}\int_0^t U_s ds\\
& = c^{(1)}_{p,t} +  c^{(2)} _{p} \int_0^t U_s ds,
\end{align*}
where $c^{(1)}_{p,t}$ and $c^{(2)}_{p}$  are positive constants depending on $p$, $q$, $t$, $\| f\|_{L^q(\mathbb{R})}$, $\| f\|_{L^2(\mathbb{R})}$ and $\|u_0\|_{L^1(\RR)}$.
 We obtain the same bound on $\|\mathcal{I}_{2,-}(t,x)\|_p$ in an identical way. Similarly, $
\mathcal{I}_3(t,x) = \mathcal{I}_{3,+}(t,x) + \mathcal{I}_{3,-}(t,x)$ where 
\begin{equation} \label{i31}
\mathcal{I}_{3,+}(t,x) = \int_0^t \int_0^{\infty} \int_y^{\infty} \frac{\partial G_{t-s}}{\partial x}(x-z) \psi(s,z)\sigma_s(y)^2 dz dy ds
\end{equation}
and
\begin{equation} \label{i32}
\mathcal{I}_{3,-}(t,x) = \int_0^t \int_{-\infty}^0 \int_{-\infty}^y \frac{\partial G_{t-s}}{\partial x}(x-z) \psi(s,z)\sigma_s(y)^2 dz dy ds.
\end{equation}
Again, integrating by parts, using Minkowski's inequality, and Proposition 3.1, we obtain
\begin{align*}
\| \mathcal{I}_{3,+}(t,x) \|_p &\leq  \int_0^t \int_{0}^{\infty} f(y)^2 \Big( M_{p,t} G_{t-s}(x-y) + \int_y^{\infty} G_{t-s}(x-z) \Big\| \frac{\partial \psi}{\partial z}(s,z) \Big\|_p dz \Big) dy ds\\
&\leq M_{p,t} \|f\|_{L^{q}(\mathbb{R})}^2 \int_0^t \|G_{t-s}\|_{L^{q_1}(\mathbb{R})}ds + \|f\|_{L^2(\mathbb{R})}^2 \int_0^t \sup_{x\in \mathbb{R}} \Big\| \frac{\partial \psi}{\partial x}(s,x) \Big\|_p  ds.
\end{align*}
 Hence, we have
\begin{align*}
\| \mathcal{I}_{3,+}(t,x) \|_p^2 &  \leq    k M_{p,t}^2 \|f\|_{L^q(\mathbb{R})}^4\Big(\frac{q}{q-1}\Big)^2 t^{2-2/q}  + 2t \|f\|_{L^2(\mathbb{R})}^4 \int_0^t U_s ds\\
%\frac {8q}{ 3q-2}  k t^{\frac 34-\frac 1{2q}}  M_{p,t} \|f\|_{L^{q}(\mathbb{R})}^2+ 2t \|f\|_{L^2(\mathbb{R})}^2 \int_0^t U_s ds\\
& = c^{(3)}_{p,t} + c^{(4)} _{p,t} \int_0^t U_s ds,
\end{align*}
for some constants  $c^{(3)}_{p,t}$ and $ c^{(4)} _{p,t}$.
We can bound $\mathcal{I}_{3,-}$ in the same way. Putting each bound from above together and applying Gronwall's inequality, we obtain the desired result. 
\end{proof}

\begin{proposition}  \label{prop3.7}
Suppose that  in addition to condition (A1), the initial condition $u_0$ is H\"older continuous of order $\alpha \in [0,1]$. Then,
for  any $p \geq 2$ and any $T>0$, there exists some constant $C$, depending on $p$, $T$,  $u_0$,  and $ f$, such that for all
 $s,t \in [0,T]$, and   $ x, y \in \mathbb{R}$,  
$$
\Big\| \frac{\partial \psi}{\partial x}(t,x) - \frac{\partial \psi}{\partial y}(s,y)\Big\|_p \leq C   \big(|t-s|^{ \frac {\alpha} 2 \wedge(\frac 14 -\frac 1{2q})}+ |x-y|^{ \alpha \wedge (\frac 12 -\frac 1q) }\big),
$$
where $q$ is the exponent appearing in Assumption (A2).
\end{proposition}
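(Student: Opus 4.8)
The plan is to follow the two-part scheme of Proposition~\ref{prop3.4}, estimating the spatial and temporal increments of $\partial_x\psi$ separately, but starting now from the integral equation (\ref{kk9}) instead of (\ref{eq5}). In each case I would split the increment according to the three summands of (\ref{kk9}): the deterministic term $\mathcal{I}_1$ (resp. $J_1$) built from $\partial_x G_t$ and $\psi_0$, the stochastic term $\mathcal{I}_2$ (resp. $J_2$), and the drift term $\mathcal{I}_3$ (resp. $J_3$). The exponent $\alpha$ enters only through the deterministic term. Integrating by parts transfers the $x$-derivative onto $\psi_0$, and since $u_0$ is bounded and $\alpha$-H\"older while $\psi_0$ is bounded, the function $\psi_0'=-\tfrac12 u_0\psi_0$ is $\alpha$-H\"older. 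Hence for the space increment $|\mathcal{I}_1(t,x_1)-\mathcal{I}_1(t,x_2)|\le\int_{\RR}G_t(u)\,|\psi_0'(x_1-u)-\psi_0'(x_2-u)|\,du\le C|x_1-x_2|^\alpha$, and for the time increment the semigroup identity together with $\int_{\RR}G_\tau(w)|w|^\alpha\,dw=C\tau^{\alpha/2}$ yields $|J_1|\le C(t_2-t_1)^{\alpha/2}$.

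The heart of the argument is the two noise terms, where the obstacle is the extra near-diagonal singularity of $\partial_x G_{t-s}$ as $s\uparrow t$. Here I would reuse the integration-by-parts device already exploited in the proof of Proposition~\ref{prop3.6}: on each half-line piece $\mathcal{I}_{2,\pm},\mathcal{I}_{3,\pm}$ (and their time-increment analogues) one writes $\int_y^\infty \partial_x G_{t-s}(x-z)\psi(s,z)\,dz = G_{t-s}(x-y)\psi(s,y)+\int_y^\infty G_{t-s}(x-z)\,\partial_z\psi(s,z)\,dz$, trading the singular kernel derivative for a boundary term plus a bulk term carrying $\partial_z\psi$. Taking the relevant spatial or temporal increment of the kernels and applying Burkholder's and Minkowski's inequalities, both resulting contributions are controlled using the uniform moment bounds $M_{p,T}$ from (\ref{mp}) and $\sup_{s,x}\|\partial_x\psi(s,x)\|_p$ from Proposition~\ref{prop3.6}. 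A key simplification over Proposition~\ref{prop3.4} is that no increment of $\partial_x\psi$ appears on the right-hand side, so no Gronwall step is required: one simply substitutes the a priori bounds.

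The binding contribution is the boundary term $G_{t-s}(x-y)\psi(s,y)$, and it is exactly here that the hypothesis $f\in L^q(\RR)$ with $q>2$ is used. Writing $\tfrac1{q_1}+\tfrac2q=1$ and applying H\"older's inequality in $y$, the temporal boundary contribution to $\|J_2\|_p^2$ is bounded by a multiple of $\|f\|_{L^q}^2\int_0^{t_1}\big(\int_{\RR}|G_{t_2-s}(x-y)-G_{t_1-s}(x-y)|^{2q_1}\,dy\big)^{1/q_1}\,ds$, which is of the form controlled by Lemma~\ref{lem1} with $\theta_1=2q_1$, $\theta_2=0$, $\beta=1/q_1$; one checks (\ref{kk1}) holds, and the lemma returns the power $(t_2-t_1)^{1/(2q_1)}=(t_2-t_1)^{1/2-1/q}$, so after a square root $\|J_2\|_p\le C(t_2-t_1)^{1/4-1/(2q)}$. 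The remaining $[t_1,t_2]$-piece, carrying a single kernel $G_{t_2-s}$, is bounded by $\int_{t_1}^{t_2}\|G_{t_2-s}\|_{L^{2q_1}}^2\,ds$ and gives the same power. For the space increment the identical H\"older step reduces matters to the $L^{2q_1}$-modulus of continuity of the Gaussian kernel, and the scaling computation (a spatial analogue of Lemma~\ref{lem1}) integrates in $s$ to the power $|x_1-x_2|^{1/2-1/q}$.

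Finally, the bulk terms and the drift terms $\mathcal{I}_3,J_3$ are not the bottleneck: after integration by parts they involve $L^1$ kernel increments and one fewer power of the singular kernel, so they yield strictly larger exponents (for instance the drift boundary term reduces to Lemma~\ref{lem1} with $\theta_1=q_1$, giving the power $(t_2-t_1)^{1-1/q}$). Taking the minimum of the deterministic exponent, $\alpha$ in space and $\alpha/2$ in time, with the noise exponent, $\tfrac12-\tfrac1q$ in space and $\tfrac14-\tfrac1{2q}$ in time, produces the stated estimate. The main difficulty throughout is the singularity of $\partial_x G_{t-s}$ near $s=t$; the integration-by-parts identity, combined with the extra integrability $f\in L^q$, $q>2$, is precisely what makes the boundary term $s$-integrable and fixes the sharp H\"older exponents.
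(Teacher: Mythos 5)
Your proposal is correct and shares the paper's overall architecture: the same three-term decomposition from (\ref{kk9}), the same integration by parts trading $\partial_x G_{t-s}$ for the boundary term $G_{t-s}(x-y)\psi(s,y)$ plus a bulk term carrying $\partial_z\psi$, the same H\"older step with $\tfrac1{q_1}+\tfrac2q=1$ and Lemma \ref{lem1} (resp.\ its spatial scaling analogue) on the boundary term, and the same $\alpha$-H\"older bound on $\psi_0'$ for the deterministic term. The one genuine divergence is your treatment of the bulk terms, and here your description of the paper is inaccurate even though your alternative is sound: the paper does \emph{not} skip Gronwall. For the spatial increment it rewrites $\int_y^\infty[G_{t-s}(x_1-z)-G_{t-s}(x_2-z)]\partial_z\psi(s,z)\,dz$ via the change of variables of Proposition \ref{prop3.4}, so that $\widetilde V_s=\sup_{|x-y|=\delta}\|\partial_x\psi(s,x)-\partial_x\psi(s,y)\|_p$ reappears on the right-hand side and the estimate is closed by Gronwall; for the time increment it adds and subtracts $\partial_x\psi(s,x)$ and feeds the just-proved spatial exponent into Lemma \ref{lem1} with $\theta_2=\alpha\wedge(\tfrac12-\tfrac1q)$. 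Your shortcut---bounding $\|\partial_z\psi(s,z)\|_p$ uniformly by $N_p$ from Proposition \ref{prop3.6} and controlling the $L^1_z$-modulus of the kernel increment---does work: since $\int_{\RR}|G_\tau(u+\delta)-G_\tau(u)|\,du\le\min(2,C\delta\tau^{-1/2})$, the spatial bulk contribution to the squared norm is at most $C N_p^2\|f\|_{L^2(\RR)}^2\,\delta^2(1+|\log\delta|)\le C\delta^{1-2/q}$, and Lemma \ref{lem1} with $\theta_1=1$, $\theta_2=0$, $\beta=2$ gives $C(t_2-t_1)$ for the temporal analogue, so in both cases the bulk term is dominated by the boundary term and no self-referential inequality is needed. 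What your route buys is a shorter proof without the Gronwall closure (it only requires the a priori bound of Proposition \ref{prop3.6}, which the paper also invokes as $N_p$); the paper's route yields the identical exponents but keeps the argument parallel to Proposition \ref{prop3.4}. Your remaining claims---that the $[t_1,t_2]$ pieces and the drift terms $\mathcal{I}_3, J_3$ produce strictly larger exponents (e.g.\ $(t_2-t_1)^{1-1/q}$ for the drift boundary term)---check out against Lemma \ref{lem1}.
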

\begin{proof}
We  first study  H\"older continuity in the space variable. 
Fix $t \in [0,T]$, let $x_1,x_2 \in \mathbb{R}$ be given, and set $\delta = |x_1-x_2|$. Without loss of generality we can assume that   $\delta \le 1$.  We consider spatial increments of each term in \eqref{kk11} one at a time. The first term is easily controlled  integrating by parts and using the  fact that   $\psi'_0$ is H\"older continuous of order $\alpha$:
$$
|\mathcal{I}_1(t,x_1)- \mathcal{I}_1(t,x_2)| \leq  \int_{\mathbb{R}} G_t(y) | \psi'_0(x_1-y) - \psi'_0(x_2-y)|dy \leq C \delta^\alpha.
$$
For the second term, we again use the decomposition $\mathcal{I}_{2}(t,x)=\mathcal{I}_{2,+}(t,x)+\mathcal{I}_{2,-}(t,x)$,
where  $\mathcal{I}_{2,+}$ and $\mathcal{I}_{2,-}$  have been introduced in (\ref{kl1}) and (\ref{kl2}), respectively.  Integrating by parts, we obtain
\begin{eqnarray*}  
 && \int_y^{\infty} \Big[ \frac{\partial G_{t-s}}{\partial x}(x_1-z)-\frac{\partial G_{t-s}}{\partial x}(x_2-z)\Big]\psi(s,z) dz \\ 
 && \quad =\big[G_{t-s}(x_1-y) - G_{t-s}(x_2-y) \big] \psi(s,y)\\ 
&&  \qquad  + \int_y^{\infty} \big[G_{t-s}(x_1-z) - G_{t-s}(x_2-z) \big] \frac{\partial \psi}{\partial z}(s,z) dz 
 \\ &&\quad =: \mathcal{I}_{2,+}^A(t-s,x_1, x_2,y) +\mathcal{I}_{2,+}^B(t-s,x_1,x_2,y).
\end{eqnarray*}
Applying Burkholder's inequality,  (\ref{eq3}), Minkowski's inequality, and Proposition 3.1, we get
\begin{eqnarray} \notag
&& \Big\| \int_0^t \int_0^{\infty} \mathcal{I}_{2,+}^A(t-s,x_1,x_2,y) \sigma_s(y) W(ds, dy) \Big\|_p^2 \\ \notag
&& \qquad \leq C \int_0^t \int_{\mathbb{R}}\big| G_{t-s}(x_1-y)-G_{t-s}(x_2-y)\big|^2 f(y)^2 dy ds \\ 
& & \qquad \leq C \|f\|^2_{L^{q}(\mathbb{R})}\  \int_0^t  \left( \int_{\mathbb{R}}  \big|G_{t-s}(x_1-y)-G_{t-s}(x_2-y)\big|^{2q_1}  dy  \right)^{1/q_1}ds,
\label{kl3}
\end{eqnarray}
where $\frac 2q + \frac 1{q_1} =1$. Making the substitutions $y = \delta z$ and $t-s = \delta^2 v$, yields
\begin{eqnarray}   \notag
&&\int_0^t \Big\| \big[G_{t-s}(x_1-\cdot)-G_{t-s}(x_2-\cdot)\big]^2 \Big\|_{L^{q_1}(\mathbb{R})} ds\\  \notag
& & \qquad \leq  C\delta^{1/q_1} \int_0^{\infty} v^{-1} \Big(  \int_{\mathbb{R}} \Big| \exp(-(1+z)^2/4v) - \exp(-z^2/4v)\Big|^{2q_1}dz\Big)^{1/q_1} dv\\      
&& \qquad = C \delta^{1/q_1}.    \label{kl4}
\end{eqnarray}
Therefore, from (\ref{kl3}) and  (\ref{kl4}), we obtain
\begin{equation} \label{kl5}
 \Big\| \int_0^t \int_0^{\infty} \mathcal{I}_{2,+}^A(t-s,x_1,x_2,y) \sigma_s(y) W(ds, dy) \Big\|_p^2
 \le C \|f\|^2_{L^{q}(\mathbb{R})}  \delta^{1/q_1}.
 \end{equation}
To handle $\mathcal{I}_{2,+}^B$, we use the same techniques as in the proof of Proposition 3.4 to first write 
\begin{align*}
\mathcal{I}_{2,+}^B(t-s,x_1,x_2,y) &= \int_{-\infty}^{x_1-y} G_{t-s}(u)\Big[ \frac{\partial \psi}{\partial z}(s,x_1-u)-\frac{\partial \psi}{\partial z}(s,x_2-u)\Big] du \\
&\hspace{10mm}+ \int_{x_2-y}^{x_1-y} \frac{\partial \psi}{\partial z}(s,x_2-u) G_{t-s}(u) du.
\end{align*}
Let 
$$
\widetilde{V}_s := \sup_{|x-y|=\delta} \Big\| \frac{\partial \psi}{\partial x}(s,x)-\frac{\partial \psi}{\partial x}(s,y)\Big\|_p
$$
and 
$$
N_p := \sup_{t,x}\Big\| \frac{\partial \psi}{\partial x}(t,x)\Big\|_p.
$$
Then, we can write 
\begin{align*}
\big\| \mathcal{I}_{2,+}^B(t-s,x_1,x_2,y)\big\|_p &\leq \widetilde{V}_s + N_p \int_{x_2-y}^{x_1-y} G_{t-s}(u) du\\
&\leq \widetilde{V}_s + N_p  \sqrt{\delta} [ 8(t-s)] ^{-1/4}.
\end{align*}
Hence, 
\begin{equation} \label{kl6}
 \Big\| \int_0^t \int_0^{\infty} \mathcal{I}_{2,+}^B(t-s,x_1,x_2,y) \sigma_s(y) W(ds, dy) \Big\|_p^2  
 \le  C  \| f\|^2_{L^2(\mathbb{R})}  \left(\int_0^t \widetilde{V}_s^2 ds +  \delta N_p^2 \right).
\end{equation}
Therefore, from (\ref{kl5}) and (\ref{kl6}), we get
\[
\big\| \mathcal{I}_{2,+}(t,x_1)-\mathcal{I}_{2,+}(t,x_2)\big\|_p^2  
 \leq  C  \|f\|^2_{L^{q}(\mathbb{R})}  \delta^{1/q_1} + C \|f\|_{L^2(\mathbb{R})}^2 \Big(\delta N_p^2  + \int_0^t \widetilde{V}_s^2 ds\Big).
\]
We can get the same bounds on increments of $\mathcal{I}_{2,-}$ in an identical way. Once again, write $\mathcal{I}_3 = \mathcal{I}_{3,+}+\mathcal{I}_{3,-}$, as in (\ref{i31}) and (\ref{i32}). Integrate by parts, and use the same techniques as above to get
\begin{align*}
\big\| \mathcal{I}_{3,+}(t,x_1)- \mathcal{I}_{3,+}(t,x_2)\big\|_p &\leq C \bigg( \int_0^t \int_{\mathbb{R}} \big| G_{t-s}(x_1-y)-G_{t-s}(x_2-y)\big| f(y)^2 dy ds\\
&\qquad  +C\|f\|_{L^2(\mathbb{R})}^2 \sqrt{\delta} N_p +\|f\|_{L^2(\mathbb{R})}^2 \int_0^t  \widetilde{V}_s ds \bigg)\\
&\leq C \Big  ( \| f\|^2 _{ L^q( \mathbb{R})}    +\|f\|_{L^2(\mathbb{R})}^2 \sqrt{\delta} N_p +\|f\|_{L^2(\mathbb{R})}^2 \int_0^t  \widetilde{V}_s ds \Big).
 \end{align*}
 The same bounds for increments of $\mathcal{I}_{3,-}$ are obtained the same way. Put all of these pieces together by taking the smallest power of $\delta$ to get
 $$
 \widetilde{V}_t^2 \leq C \left(  \delta ^{2\alpha} +\delta^{1-  2/q} + \int_0^t \widetilde{V}_s^2 ds \right).
 $$
Thus, Gronwall's inequality implies that $x\mapsto \frac{\partial \psi}{\partial x}(t,x)$ is H\"{o}lder continuous in $L^p(\Omega)$, uniformly in $t$, with order of regularity  $ \alpha \wedge(1/2-  1/q)$.

 To establish regularity in time,  fix $0\le t_1 <t_2 \le T$ and  write
 \begin{eqnarray*}
 | \mathcal{I}_1(t_2,x) - \mathcal{I}_1(t_1,x) | &=&
\left|  \int_{\mathbb{R}} G_{t_1} (x-y) \left(  \int_{\mathbb{R}} G_{ t_2-t_1} (y-z) [ \psi'_0(z) -\psi'_0(y) ] dz \right) dy  \right| \\
 &\le&  C \int_{\mathbb{R}} G_{t_1} (x-y) \left(  \int_{\mathbb{R}} G_{ t_2-t_1} (y-z)  |y-z| ^\alpha dz \right) dy\\
 & =& C (t_2-t_1) ^{\alpha/2}.
 \end{eqnarray*}
 Then, we again split up $\|\mathcal{I}_{2,+}(t_2,x)-\mathcal{I}_{2,+}(t_1,x)\|_p$~ into two terms as 
\begin{eqnarray*}
&& \| \mathcal{I} _{2,+}(t_2,x)- \mathcal{I}_{2,+}(t_1,x)\|_p \\
&& \qquad \leq \Big\| \int_0^{t_1} \int_0^{\infty} \sigma_s(y) \Big( \int_y^{\infty}  \psi(s,z) \frac{\partial }{\partial x}[G_{t_2-s}(x-z)-G_{t_1-s}(x-z)]dz \Big) W(ds,dy) \Big\|_p \\
&&\qquad \qquad + \Big\|\int_{t_1}^{t_2} \int_0^{\infty} \sigma_s(y) \Big( \int_y^{\infty} \psi(s,z) \frac{\partial }{\partial x}G_{t_2-s}(x-z)dz \Big) W(ds,dy)\Big\|_p\\
&& \qquad = : \widetilde{J}_1 + \widetilde{J}_2.
\end{eqnarray*}
Integrate by parts, and apply Burkholder's and Minkowski's inequalities to get
\begin{align*}
\widetilde{J}_2
&\leq c_p \bigg( \int_{t_1}^{t_2} \int_0^{\infty} \Big\| G_{t_2-s}(x-y) \psi(s,y) + \int_y^{\infty} G_{t_2-s}(x-z) \frac{\partial \psi}{\partial z}(s,z) dz \Big\|_p^2 f(y)^2 dy ds \bigg)^{1/2} \\
&\leq C \bigg( \int_{t_1}^{t_2} \int_{\mathbb{R}} G_{t_2-s}(x-y)^2 f(y)^2 dy ds\\
& \qquad \qquad  + \int_{t_1}^{t_2} \int_0^{\infty} f(y)^2 \Big\| \int_y^{\infty} G_{t_2-s}(x-z) \frac{\partial \psi}{\partial z}(s,z) dz \Big\|_p^2 dy ds\bigg)^{1/2}.
\end{align*}
By H\"older's inequality, we have
$$
\int_{t_1}^{t_2} \int_{\mathbb{R}} G_{t_2-s}(x-y)^2 f(y)^2 dy ds \leq \int_{t_1}^{t_2} \|G_{t_2-s}\|_{L^{2q_1}(\mathbb{R})}^2 \|f\|_{L^{q}(\mathbb{R})}^2 ds = C (t_2-t_1)^{1/(2q_1)}
$$
where $ \frac 1{q_1} +\frac 2q =1$. For the other term, we make use of the uniform bounds on moments of the derivative of $\psi$ to get
$$
 \int_{t_1}^{t_2} \int_0^{\infty} f(y)^2 \Big\| \int_y^{\infty} G_{t_2-s}(x-z) \frac{\partial \psi}{\partial z}(s,z) dz \Big\|_p^2dy ds \leq C (t_2-t_1)
$$
for some constant $C$. Hence, 
$$
\widetilde{J}_2 \leq C |t_2-t_1|^{1/(4q_1)}.
$$
For the  term $\widetilde{J}_1$, we first apply Burkholder's inequality and integrate by parts to get
\begin{align*}
\widetilde{J}_1 
&\leq C \bigg( \int_0^{t_1} \int_0^{\infty} f(y)^2 \|\psi(s,y)\|_p^2 \big[ G_{t_2-s}(x-y)-G_{t_1-s}(x-y)\big]^2 dy ds \\
& \qquad + \int_0^{t_1}\int_0^{\infty}\bigg\| \int_y^{\infty} \big[ G_{t_2-s}(x-z)-G_{t_1-s}(x-z)\big] \frac{\partial \psi}{\partial z}(s,z) dz \bigg\|_p^2 f(y)^2 dy ds \bigg)^{1/2}\\
&  =:  C(\widetilde{J}_{1,1} + \widetilde{J}_{1,2})^{\frac 12}.
\end{align*}
Using  the uniform bounds on $\psi$, choosing $q_1$ such that $\frac 1{q_1} + \frac 2q =1$, 
and applying Lemma \ref{lem1} with $\beta=1/q_1$, $\theta_1 =2q_1$ and $\theta_2=0$, we can write 
\beas
\widetilde{J}_{1,1} & \leq &  C \| f\|^2_{L^q(\RR)}  \int_0^{t_1}  \left( \int_{\RR} | G_{t_2-s}(y)-G_{t_1-s}(y) |^{2q_1} dy \right) ^{\frac 1{q_1}} ds \\
& \le & C   \| f\| ^2_{ L^q(\RR)} (t_2-t_1)^{1/(2q_1)}.
\eeas
For the term $\widetilde{J}_{1,2}$, we same techniques as in the proof of the  H\"older regularity in  time of $\psi$ by first adding and subtracting $\displaystyle \frac{\partial \psi}{\partial x}(s,x)$ and applying the spatial regularity of the derivative of $\psi$ to get
\beas
\widetilde{J}_{1,2} 
&\leq & 2 \|f\|_{L^2(\mathbb{R})}^2 \int_0^{t_1} \Big( \int_{\mathbb{R}}|G_{t_2-s}(x-z)-G_{t_1-s}(x-z)| |x-z|^{\alpha \wedge (\frac 12-\frac 1q)} dz \Big)^2 ds \\
&& + 2\int_0^{t_1}\int_0^{\infty} \Big\|\frac{\partial \psi}{\partial x}(s,x)\Big\|_p^2 \Big(\int_y^{\infty} \big[G_{t_2-s}(x-z)-G_{t_1-s}(x-z)\big]dz\Big)^2f(y)^2 dy ds.
\eeas
We apply Lemma \ref{lem1} with $\beta = 2, \theta_1 = 1, $ and $\theta_2 = \alpha\wedge (\frac{1}{2}-\frac{1}{q})$  to get
\[
  \int_0^{t_1} \left( \int_{\mathbb{R}}|G_{t_2-s}(x-z)-G_{t_1-s}(x-z)| |x-z|^{\alpha\wedge(\frac 12-\frac 1q)} dz \right)^2 ds \\
  \le  C  (t_2-t_1) ^{1+ \alpha \wedge (\frac 12-\frac 1q)}.
\]
 Another application of  Lemma \ref{lem1} with $\beta=2$, $\theta_1=1$ and $\theta_2=0$, yields
 \[
  \int_0^{t_1} \left( \int_{y} ^\infty [G_{t_2-s}(x-z)-G_{t_1-s}(x-z)] dz \right)^2 ds \\
  \le  C  (t_2-t_1).
\]
  Hence, 
\begin{align*}
\widetilde{J}_1\leq  C (t_2-t_1)^{1/(4q_1)}.
\end{align*}
 Put these together to get
$$
\| \mathcal{I}_{2,+}(t_2,x)-\mathcal{I}_{2,+}(t_1,x)\|_p \leq C(t_2-t_1)^{1/4-1/(2q)}.
$$
We can obtain the same upper bound for $I_{2,-}$ and hence
$$
\|\mathcal{I}_2(t_2,x)-\mathcal{I}_2(t_1,x)\|_p\leq C(t_2-t_1)^{ 1/4 -1/(2q)}.
$$
For the third term, we apply the same techniques we used for $\mathcal{I}_2$ to get
$$
\| \mathcal{I}_3(t_2,x)-\mathcal{I}_3(t_1,x)\|_p \leq C (t_2-t_1)^{1/4}.
$$
Hence, we have the desired result.
\end{proof}

\begin{remark}  \label{rem2}
If we do not assume the H\"older continuity of $u_0$, then $\psi'_0$ is  only continuous.  Then,  avoiding the integration by parts in the proof of
the H\"older continuity of the first term, we have  a result of the form
$$
\Big\| \frac{\partial \psi}{\partial x}(t,x) - \frac{\partial \psi}{\partial y}(s,y)\Big\|_p \leq C  (t\wedge s) ^{-1/2}  \big(|t-s|^{\frac 14 -\frac 1{2q}}+ |x-y|^{ \frac 12 -\frac 1q }\big),
$$
where the factor $t ^{-1/2}  $, assuming $t\le s$,  comes from the integral  $ \int_{\RR} \left|\frac {\partial G_t}{\partial t} (y) \right| dy$.
That is, the H\"older continuity blows up at $t=0$. However, $\frac{\partial \psi}{\partial x}(t,x)$ is continuous in $L^p(\Omega)$ on $\RR_+\times \RR$ for all $p\ge 2$, because
 $\psi'_0$ is continuous.
\end{remark}

%%%%%%%%%%%%%%%%%%%%%%%%%%%%%%%%%%%%%%%%%%%%%%%%%%%%%%%%%%%%%%%%%%%%%%%%%%%%%%%%%%%%%%%%%%%%%%%%%%%%%%%%%%%%%%%%%%%%%%%%%%%%%%%%%%%%%%%%%%%%%%%%%%%%%%%%%%%%%%%%%%%%%%%%%%%%%%%%%%%%%%%%%%%%%%%%%%%%%%%%%%%%

\section{Hopf-Cole Transformation}
In this section,  we construct a solution to  Burgers' equation (\ref{eq1}) using the Hopf-Cole transformation and the results of the previous section.
Notice first that the process
\beas
v(t,x) &:=& -2 \frac{\partial }{\partial x} \log \psi(t,x) =- \frac 2{\psi(t,x)}  \frac {\partial \psi}{\partial x}(t,x)
\eeas 
is well defined and has uniformly bounded moments of order $p$ for all $p\ge 2$, due to  Proposition   \ref{prop3.6}  and Remark \ref{rem1}.
We now establish the main result of the paper which asserts that the process $v(t,x)$ is the solution to the Burgers' equation (\ref{eq1}). Again, uniqueness follows for free from \cite{burger2}. 

The main idea of the proof is to introduce the regularized process
$$
\psi_{\epsilon}(t,x) := \int_{\mathbb{R}} G_{\epsilon}(x-y) \psi(t,y) dy,
$$
for $\ep \in (0,1]$ and to find the equation satisfied by  $u_{\epsilon}(t,x) := -2\frac{\partial }{\partial x} \log \psi_{\epsilon}(t,x)$.
Based on  previous results, it is easy to see that $\psi_{\epsilon}$ satisfies the following property.

\begin{lemma} \label{lem4.1}
For  any $p\geq 2$ and $T>0$,  we have 
\begin{equation} \label{kk3}
\sup_{x \in \RR,\epsilon \in (0,1], t \in [0,T] } \left( \|\psi_{\epsilon}(t,x) \|_p +\| \psi_{\epsilon}(t,x)^{-1} \|_p 
+ \left\| \frac{\partial \psi_\ep}{\partial x}(t,x) \right\| _p\right) < \infty.
\end{equation}
For any $p\ge 2$, $x\in \RR$, and $t\in (0,T]$, we have
\begin{equation} \label{kk4}
  \| \psi(t,x) - \psi_{\epsilon}(t,x)\|_p  \le C \ep^{1/4}
 \end{equation}
 and
 \begin{equation} \label{kk5}
 \left\| \frac{\partial \psi}{\partial x}(t,x) - \frac{\partial \psi_{\epsilon}}{\partial x}(t,x)\right  \|_p \le C t^{-1/2}  \ep^{1/4 -1/(2q)}.
\end{equation}
\end{lemma}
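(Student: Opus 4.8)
The plan is to view $\psi_\epsilon(t,x) = \int_{\mathbb{R}} G_\epsilon(x-y)\psi(t,y)\,dy$ as a spatial average against the probability density $G_\epsilon(x-\cdot)$ and to reduce each of the three assertions to the moment and H\"older bounds already established for $\psi$ and $\partial_x\psi$. Throughout I would use Minkowski's integral inequality, the fact that $\int_{\mathbb{R}} G_\epsilon = 1$, and the Gaussian scaling $u=\sqrt{\epsilon}\,v$, which gives $\int_{\mathbb{R}} G_\epsilon(u)|u|^a\,du = C_a\,\epsilon^{a/2}$ for $a\ge 0$.

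For the uniform bounds (\ref{kk3}) I would argue as follows. By Minkowski and $\int G_\epsilon = 1$, $\|\psi_\epsilon(t,x)\|_p \le \int_{\mathbb{R}} G_\epsilon(x-y)\|\psi(t,y)\|_p\,dy \le M_{p,T}$ by (\ref{mp}). For the inverse, since $r\mapsto 1/r$ is convex on $(0,\infty)$ and $\psi>0$, Jensen's inequality against the probability measure $G_\epsilon(x-y)\,dy$ gives $\psi_\epsilon(t,x)^{-1}\le \int_{\mathbb{R}} G_\epsilon(x-y)\psi(t,y)^{-1}\,dy$, so Minkowski together with (\ref{mp2}) bounds it uniformly. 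The derivative is the delicate case: differentiating the kernel gives $\partial_x\psi_\epsilon(t,x)=\int_{\mathbb{R}}\partial_x G_\epsilon(x-y)\psi(t,y)\,dy$, whose crude bound is of order $\epsilon^{-1/2}$ and is \emph{not} uniform. I would instead integrate by parts to transfer the derivative onto $\psi$, writing $\partial_x\psi_\epsilon(t,x)=\int_{\mathbb{R}} G_\epsilon(x-y)\,\partial_y\psi(t,y)\,dy$, whence Minkowski and $N_p=\sup_{t\le T,\,x}\|\partial_x\psi(t,x)\|_p<\infty$ (Proposition \ref{prop3.6}) close the estimate.

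For (\ref{kk4}) I would use $\int G_\epsilon = 1$ to write $\psi(t,x)-\psi_\epsilon(t,x)=\int_{\mathbb{R}} G_\epsilon(x-y)\,[\psi(t,x)-\psi(t,y)]\,dy$, and then Minkowski together with the spatial H\"older regularity of $\psi$ from Proposition \ref{prop3.4} yields $\|\psi(t,x)-\psi_\epsilon(t,x)\|_p\le C\int_{\mathbb{R}} G_\epsilon(u)|u|^{1/2}\,du = C\epsilon^{1/4}$ by the scaling identity with $a=1/2$. For (\ref{kk5}) I would repeat the argument on the derivative, using the integration-by-parts representation of $\partial_x\psi_\epsilon$ to write $\partial_x\psi(t,x)-\partial_x\psi_\epsilon(t,x)=\int_{\mathbb{R}} G_\epsilon(x-y)\,[\partial_x\psi(t,x)-\partial_y\psi(t,y)]\,dy$. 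Since $u_0$ is not assumed H\"older here, the relevant spatial estimate is the equal-time case of Remark \ref{rem2}, namely $\|\partial_x\psi(t,x)-\partial_x\psi(t,y)\|_p\le C\,t^{-1/2}|x-y|^{1/2-1/q}$; Minkowski and the scaling with $a=1/2-1/q$ then give $C\,t^{-1/2}\epsilon^{1/4-1/(2q)}$, the factor $t^{-1/2}$ coming precisely from the blow-up at $t=0$ recorded in Remark \ref{rem2}.

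The only genuine obstacle I anticipate is the uniformity of the derivative bound in (\ref{kk3}): the naive estimate obtained by differentiating the heat kernel diverges as $\epsilon\to 0$, so the integration by parts moving $\partial_x$ from $G_\epsilon$ onto $\psi$ is essential, and it must be justified using the $L^p$-differentiability of $\psi$ established in the preceding proposition. Once that representation is in hand, all three parts are routine consequences of the earlier moment and H\"older estimates together with the Gaussian scaling.
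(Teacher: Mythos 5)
Your proposal is correct and follows essentially the same route as the paper, whose proof of Lemma \ref{lem4.1} is just the one-line citation of Jensen's inequality, Propositions \ref{prop1}, \ref{prop3.4} and \ref{prop3.6}, and Remarks \ref{rem1} and \ref{rem2}; you have simply filled in the details (Minkowski against the probability density $G_\epsilon$, Jensen for the inverse, moving the derivative onto $\psi$, and the Gaussian scaling $\int G_\epsilon(u)|u|^a\,du=C_a\epsilon^{a/2}$). Your identification of the derivative bound in (\ref{kk3}) as the one step requiring the $L^p$-differentiability of $\psi$ rather than a crude kernel estimate is exactly the right point of care.
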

\begin{proof}
Inequality (\ref{kk3}) follows form 
Jensen's inequality,  Propositions \ref{prop1}  and \ref{prop3.6}, and Remark \ref{rem1}.
Inequalities (\ref{kk4}) and (\ref{kk5}) are consequences of  Proposition \ref{prop3.4} and Remark \ref{rem2}.
\end{proof}

\begin{theorem}
The process $v(t,x) = -2 \frac{\partial }{\partial x} \log \psi(t,x)$ is a solution to $(1)$.
\end{theorem}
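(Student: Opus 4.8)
The plan is to carry out the Hopf--Cole linearization rigorously on the smoothed process $\psi_\epsilon = G_\epsilon * \psi(t,\cdot)$ already introduced, then let $\epsilon \to 0$. First I would convolve the mild equation (\ref{eq5}) in the space variable with $G_\epsilon$ and use the semigroup identity $G_\epsilon * G_\tau = G_{\tau+\epsilon}$ (together with a stochastic Fubini argument) to represent $\psi_\epsilon(t,x)$ as a mild equation with the \emph{smooth} kernel $G_{t-s+\epsilon}$. Since this kernel is regular up to the diagonal $s=t$, I can differentiate in $t$: the heat equation $\partial_t G_\tau = \partial_x^2 G_\tau$ turns the bulk contribution into $\partial_x^2 \psi_\epsilon\,dt$, while the contributions at $s=t$ (carrying the kernel $G_\epsilon$) produce a genuine It\^o differential
\[
d_t \psi_\epsilon(t,x) = \Big(\partial_x^2 \psi_\epsilon(t,x) + \tfrac18 b_\epsilon(t,x)\Big)\,dt \;-\; \tfrac12 \int_{\mathbb{R}} \mathrm{sign}(y)\,\sigma_t(y)\, g_\epsilon(t,x,y)\, W(dt,dy),
\]
where $g_\epsilon(t,x,y) := \int_{(y,z)\in S} G_\epsilon(x-z)\,\psi(t,z)\,dz$ and $b_\epsilon$ is the analogous drift density. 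Thus, for each fixed $x$, $\psi_\epsilon(t,\cdot)$ is a positive It\^o semimartingale whose moments and inverse moments are controlled by (\ref{kk3}), and the quadratic variation of its martingale part is $\tfrac14\int_{\mathbb{R}} \sigma_t^2(y)\,g_\epsilon(t,x,y)^2\,dy\,dt$.

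Next I would apply It\^o's formula to $\log \psi_\epsilon(t,x)$, which is legitimate because $\psi_\epsilon > 0$ and $\psi_\epsilon^{-1}$ has all moments by (\ref{kk3}), and then differentiate in $x$ (permissible since convolution with $G_\epsilon$ makes every object smooth in space) to obtain an SPDE for $u_\epsilon := -2\,\partial_x \log \psi_\epsilon$. The formal Hopf--Cole algebra $\partial_x^2\psi_\epsilon/\psi_\epsilon = \partial_x^2\log\psi_\epsilon + (\partial_x\log\psi_\epsilon)^2$ converts the $\partial_x^2\psi_\epsilon$ term into $\partial_x^2 u_\epsilon - \tfrac12 \partial_x(u_\epsilon^2)$, so that the Laplacian of $\psi_\epsilon$ simultaneously generates the diffusion and the Burgers nonlinearity; the It\^o correction coming from the quadratic variation must then combine with the drift density $b_\epsilon$. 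Recasting this equation in mild form against the heat kernel $G$, I would show that $u_\epsilon$ satisfies (\ref{eq1}) up to explicit remainder terms $R_\epsilon(t,x)$.

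The technical heart is identifying the limiting noise and showing the remainders vanish. Writing the martingale part of $\partial_x \psi_\epsilon$ and integrating by parts in $z$ (using $\partial_x G_\epsilon(x-z) = -\partial_z G_\epsilon(x-z)$), the half-line integral in $g_\epsilon$ produces a \emph{boundary term} $G_\epsilon(x-y)\,\psi(t,y)$ at $z=y$ plus a bulk term carrying $\partial_z\psi$. As $\epsilon\to0$ one has $G_\epsilon(x-\cdot)\to\delta_x$, so this boundary term collapses the $dy$-integral to the point evaluation $\sigma_t(x)=\sigma(t,x,u(t,x))$; after division by $\psi_\epsilon\to\psi$ and the factor $-2$ it yields precisely the coefficient $\sigma(t,x,u)$ in front of $\partial^2 W/\partial t\partial x$ in (\ref{eq1}), with the $\mathrm{sign}(y)$ factors reconciling the $x>0$ and $x<0$ cases into a single coefficient. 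Making this collapse rigorous---controlling $g_\epsilon/\psi_\epsilon$ and $\partial_x g_\epsilon/\psi_\epsilon$ in $L^p(\Omega)$, and checking that the mismatch between the It\^o correction and the drift density is of lower order in $\epsilon$---is the main obstacle, and it is exactly where (\ref{kk4}), (\ref{kk5}), the uniform bounds (\ref{kk3}), and the spatial H\"older regularity of $\psi$ and $\partial_x\psi$ from Propositions \ref{prop3.4} and \ref{prop3.6} are needed.

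Finally, the convergence $u_\epsilon \to v = -2\,\partial_x\log\psi$ and the convergence of each mild term (initial, stochastic, and drift) follow from Lemma \ref{lem4.1} together with Burkholder--Minkowski estimates built on Propositions \ref{prop3.4} and \ref{prop3.6}, so that passing to the limit identifies $v$ as a mild solution of (\ref{eq1}). Uniqueness is then inherited directly from the existence and uniqueness result of Gy\"ongy and Nualart in \cite{burger2}, which completes the proof.
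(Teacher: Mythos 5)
Your proposal follows essentially the same route as the paper: regularize via $\psi_\epsilon = G_\epsilon * \psi$, use the semigroup property to pass to a genuine semimartingale equation, apply It\^o's formula to $\log\psi_\epsilon$, differentiate in $x$ with the Hopf--Cole identity, recast in mild form, extract the limiting stochastic convolution from the boundary term of the integration by parts in $z$ (the paper's term $G_{3,\epsilon}$), show the It\^o correction and the drift contribution cancel in the limit, and invoke the uniqueness of Gy\"ongy--Nualart. The only cosmetic difference is that you phrase the limiting noise as the pointwise coefficient $\sigma(t,x,u(t,x))$ in the differential form rather than as the convergence of $\int_0^t\int_{\mathbb{R}} G_{3,\epsilon}(s,y)\sigma_s(y)\,W(ds,dy)$ to the stochastic convolution in the mild form, but the mechanism you identify is exactly the one the paper uses.
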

\begin{proof}

From Proposition  \ref{prop3.2}, we have that $\psi_{\epsilon}$ satisfies 
\begin{align*}
\psi_{\epsilon}(t,x) &= \int_{\mathbb{R}} G_{t+\epsilon}(x-y) \psi_0(y) dy -\frac{1}{2} \int_0^t \int_{S}{\rm sign} (y) G_{t+\epsilon-s}(x-z) \psi(s,z) \sigma_s(y)  dz W(ds,dy) 
\\  &\hspace{5mm}+\frac{1}{8} \int_0^t \int_{S} G_{t+\epsilon-s}(x-z) \psi(s,z)   \sigma_s(y)^2 dz dy ds.
\end{align*}
Next, apply the semigroup property of the heat kernel to get
\begin{align*}
\psi_{\epsilon}(t,x)&= \int_{\mathbb{R}} G_t(x-z) \Big(\int_{\mathbb{R}} G_{\epsilon}(z-y)\psi_0(y) dy\Big) dz\\ 
&\hspace{5mm}-\frac{1}{2} \int_0^t \int_{S} \int_{\mathbb{R}} {\rm sign} (v)G_{t-s}(x-y)G_{\epsilon}(y-z) \psi(s,z) \sigma_s(v) dy dz  W(ds, dv) \\
&\hspace{9mm} + \frac{1}{8} \int_0^t \int_{S} \int_{\mathbb{R}} G_{t-s}(x-y)G_{\epsilon}(y-z) \psi(s,z) \sigma_s(v)^2 dy dz dv ds.
\end{align*}
Note that this is the mild formulation of the following stochastic heat equation
\begin{align*}
\psi_{\epsilon}(t,x) &= \int_{\mathbb{R}} G_{\epsilon}(x-y) \psi_0(y)dy + \int_0^t \frac{\partial^2 \psi_{\epsilon}}{\partial x^2}(s,x) ds \\
& \hspace{5mm} - \frac{1}{2} \int_0^t \int_{S}{\rm sign} (y) G_{\epsilon}(x-z) \psi(s,z) \sigma_s(y) dz W(ds,dy)\\ 
&\hspace{10mm}+ \frac{1}{8} \int_0^t \int_{S} G_{\epsilon}(x-z) \psi(s,z) \sigma_s(y)^2 dz dy ds.
\end{align*}
The process $t \to \psi_\ep(t,x)$ is a semimartingale and applying It\^{o}'s formula to $\log \psi_{\epsilon}(t,x)$  yields
\begin{align*}
\log \psi_{\epsilon}(t,x) &= \log \Big(\int_{\mathbb{R}} G_{\epsilon}(x-y)\psi_0(y) dy\Big) + \int_0^t \frac{1}{\psi_{\epsilon}(s,x)} \frac{\partial^2 \psi_{\epsilon}}{\partial x^2}(s,x) ds \\
& \hspace{5mm} -\frac{1}{2} \int_0^t \int_{S}{\rm sign} (y) \frac{1}{\psi_{\epsilon}(s,x)} G_{\epsilon}(x-z) \psi(s,z) \sigma_s(y) dzW(ds,dy)\\
&  +\frac{1}{8} \int_0^t \int_{S} \frac{1}{\psi_{\epsilon}(s,x)} G_{\epsilon}(x-z) \psi(s,z)  \sigma_s(y)^2 dz dy ds\\
& \hspace{-5mm} -\frac{1}{8} \int_0^t \int_{\mathbb{R}} \frac{1}{\psi_{\epsilon}(s,x)^2}\Psi_{\epsilon}(s,x,y)^2 \sigma_s(y)^2 dy ds,
\end{align*}
where 
$$\Psi_{\epsilon}(s,x,y):=\mathbf{1}_{(y \geq 0)} \int_y^{\infty} G_{\epsilon}(x-z) \psi(s,z) dz +\mathbf{1}_{(y < 0)} \int_{-\infty}^y G_{\epsilon}(x-z) \psi(s,z) dz.$$ 
Now, noting that basic calculus gives $\frac{1}{f} \frac{\partial^2 f}{\partial x^2} = \frac{\partial^2}{\partial x^2} (\log f) + (\frac{\partial }{\partial x} \log f )^2$,  we have
\begin{align*}
\frac{\partial }{\partial x} \log \psi_{\epsilon}(t,x) &= \frac{\partial }{\partial x} \log \Big( \int_{\mathbb{R}} G_{\epsilon}(x-y) \psi_0(y) dy \Big)\\
&+ \int_0^t \frac{\partial^2 }{\partial x^2}\Big(\frac{\partial }{\partial x}\log \psi_{\epsilon}(s,x) \Big) ds + \int_0^t \frac{\partial }{\partial x}\bigg( \Big( \frac{\partial }{\partial x}\log \psi_{\epsilon}(s,x) \Big)^2 \bigg) ds \\
&\hspace{5mm}- \frac{1}{2} \int_0^t \int_{S} {\rm sign} (y)\frac{\partial }{\partial x}\Big( \frac{1}{\psi_{\epsilon}(s,x)} G_{\epsilon}(x-z)\Big) \psi(s,z)   \sigma_s(y) dz W(ds,dy) \\
&\hspace{5mm}+ \frac{1}{8}\int_0^t \int_{S} \frac{\partial }{\partial x}\Big( \frac{1}{\psi_{\epsilon}(s,x)} G_{\epsilon}(x-z)\Big) \psi(s,z)  \sigma_s(y)^2 dz dy ds\\
&\hspace{5mm}- \frac{1}{8}\int_0^t \int_{\mathbb{R}} \frac{\partial }{\partial x}\Big( \frac{1}{\psi_{\epsilon}(s,x)} \Psi_{\epsilon}(s,x,y) \Big)^2 \sigma_s(y)^2 dy ds.
\end{align*}
So, the process $u_{\epsilon}(t,x) := -2\frac{\partial }{\partial x} \log \psi_{\epsilon}(t,x)$ satisfies the following integral equation
\begin{align*}
u_{\epsilon}(t,x) &= \int_{\mathbb{R}} G_t(x-y) u_{\epsilon}(0,y) dy - \frac{1}{2} \int_0^t \int_{\mathbb{R}} G_{t-s}(x-y) \frac{\partial }{\partial y} u_{\epsilon}(s,y)^2 dy ds \\
&+ \int_0^t \int_{S}\int_{\mathbb{R}} {\rm sign} (y)G_{t-s}(x-v) \frac{\partial }{\partial v} \Big( \frac{1}{\psi_{\epsilon}(s,v)}  G_{\epsilon}(v-z)\Big) \psi(s,z)  \sigma_s(y) dv dz W(ds,dy) \\
&-\frac{1}{4} \int_0^t  \int_S \int_{\mathbb{R}} G_{t-s}(x-v)\frac{\partial }{\partial v} \Big( \frac{1}{\psi_{\epsilon}(s,v)}  G_{\epsilon}(v-z) \Big) \psi(s,z) \sigma_s(y)^2 dv dy dz ds\\
&+\frac{1}{4} \int_0^t\int_{\mathbb{R}}\int_{\mathbb{R}} G_{t-s}(x-v)  \frac{\partial }{\partial v} \bigg( \frac{1}{\psi_{\epsilon}(s,v)}\Psi_{\epsilon}(s,v,y) \bigg)^2 \sigma_s(y)^2 dv  dy ds .
\end{align*}
Finally, integration by parts yields 
\beas
u_{\epsilon}(t,x) &=&  \int_{\mathbb{R}} G_t(x-y) u_{\epsilon}(0,y) dy + \frac{1}{2} \int_0^t \int_{\mathbb{R}} \frac{\partial  }{\partial y}G_{t-s}(x-y) u_{\epsilon}(s,y)^2 dy ds \\ 
&&- \int_0^t \int_{S}\int_{\mathbb{R}} {\rm sign} (y)\frac{\partial  }{\partial v}G_{t-s}(x-v) \frac{1}{\psi_{\epsilon}(s,v) }  G_{\epsilon}(v-z) \psi(s,z)  \sigma_s(y) dv dz W(ds,dy) \\
&&+\frac{1}{4} \int_0^t \int_{S} \int_{\mathbb{R}}\frac{\partial }{\partial v}G_{t-s}(x-v) \frac{1}{\psi_{\epsilon}(s,v)}  G_{\epsilon}(v-z) \psi(s,z)  \sigma_s(y)^2   dv dz dy ds \\
&&- \frac{1}{4} \int_0^t \int_{\mathbb{R}} \int_{\mathbb{R}}\frac{\partial }{\partial v}G_{t-s}(x-v) \frac{1}{\psi_{\epsilon}^2(s,v)} \Psi_{\epsilon}(x,v,y)^2 \sigma_s(y)^2 dv dy ds\\
&=& \sum_{i=1}^5 A_{i,\ep}.
\eeas
We will study the convergence of each term in the above expression. This will be done in several steps:

\medskip
\noindent
{\it Step 1. } For the term $A_{1,\ep}$, taking into account that
\[
u_\ep(0,x)= -2   \frac {(\psi'_0 *G_\ep) (x)} {(\psi_0 * G_\ep)(x)}
\]
and $\psi'_0$ is continuous and bounded, it is easy to show that
\[
A_{1,\ep} \rightarrow  \int_{\mathbb{R}} G_t(x-y) u_0(y) dy,
\]
as $\ep$ tends to zero.

\medskip
\noindent
{\it Step 2.} From Lemma \ref{lem4.1} it follows that
\[
\| u_{\epsilon}(t,x) - v(t,x)\|_p \le C t^{-1/2} \ep^{\frac 14-\frac 1{2q}}.
\]
 With this, it is easy to see that 
 $$
A_{2,\ep} \rightarrow \frac{1}{2} \int_0^t \int_{\mathbb{R}} \frac{\partial }{\partial y} G_{t-s}(x-y) v(s,y)^2 dy ds,
 $$
 as $\ep \rightarrow 0$, in $L^p(\Omega)$ for all $p\ge 2$.
 
 \medskip
\noindent
{\it Step 3.} 
We now show the convergence of the stochastic integral term $A_{3,\ep}$. Integrating by parts, first with respect to $v$, then with respect to $z$, we get
for $y>0$,
\begin{align*}
 \int_{\mathbb{R}}\frac{\partial }{\partial v} G_{t-s}(x-v) & \frac{1}{\psi_{\epsilon}(s,v)} \Big( \int_y^{\infty} G_{\epsilon}(v-z) \psi(s,z) dz \Big) dv \\
 = \int_{\mathbb{R}} &G_{t-s}(x-v) \frac{ 1}{\psi_{\epsilon}(s,v)^2} \frac{\partial \psi_{\epsilon}}{\partial v}(s,v) \Big(\int_y^{\infty} G_{\epsilon}(v-z) \psi(s-z) dz \Big) dv \\ &-\int_{\mathbb{R}} G_{t-s}(x-v) \frac{ 1}{\psi_{\epsilon}(s,v)}  \Big(\int_y^{\infty} \frac{\partial G_{\epsilon}}{\partial v} (v-z) \psi(s,z) dz \Big) dv \\
  = \int_{\mathbb{R}} &G_{t-s}(x-v) \frac{ 1}{\psi_{\epsilon}(s,v)^2} \frac{\partial \psi_{\epsilon}}{\partial v}(s,v) \Big(\int_y^{\infty} G_{\epsilon}(v-z) \psi(s,z) dz \Big) dv
 \\ &-\int_{\mathbb{R}} G_{t-s}(x-v) \frac{ 1}{\psi_{\epsilon}(s,v)}  \Big(\int_y^{\infty} G_{\epsilon}(v-z)\frac{\partial \psi}{\partial z} (s,z) dz \Big) dv\\
 & \hspace{5mm} - \psi(s,y) \int_{\mathbb{R}} G_{t-s}(x-v) \frac{1}{\psi_{\epsilon}(s,v)} G_{\epsilon}(v-y) dv\\
 & =: G_{1,+,\ep}(s,y)- G_{2,+, \ep} (s,y) - G_{3,\ep}(y,s).
 \end{align*}
 In a similar way, for $y<0$, we obtain
 \begin{align*}
 \int_{\mathbb{R}}\frac{\partial }{\partial v} G_{t-s}(x-v) & \frac{1}{\psi_{\epsilon}(s,v)} \Big( \int_{-\infty}^y G_{\epsilon}(v-z) \psi(s,z) dz \Big) dv \\
 & =G_{1,-,\ep}(s,y)- G_{2,-, \ep} (s,y) + G_{3,\ep}(y,s),
 \end{align*}
 where the terms $G_{1,-,\ep}(s,y)$ and $G_{2,-, \ep} (s,y) $ are analogous to  $G_{1,+,\ep}(s,y)$ and $G_{2,+, \ep} (s,y) $, respectively, by just replacing the integral $\int _y ^\infty$ by $\int_{-\infty} ^y$.

 We claim that  the following convergences hold in $L^p(\Omega)$, for any $p\ge 2$,   as $\ep \rightarrow 0$:
\begin{equation} \label{ecua1}
\int_0^t \int_{\mathbb{R}}  G_{3,\ep} (s,y)\sigma_s(y) W(ds, dy) \rightarrow \int_0^t \int_{\mathbb{R}} G_{t-s}(x-y) \sigma_s(y) W(ds, dy),
\end{equation}
\begin{equation} \label{ecua2}
\int_0^t \int_{\mathbb{R}}  [G_{1,+,\ep} (s,y)- G_{2,+,\ep} (s,y)]\sigma_s(y) W(ds, dy) \rightarrow 0.
\end{equation}
and
\begin{equation} \label{ecua3}
\int_0^t \int_{\mathbb{R}}  [G_{1,-,\ep} (s,y)- G_{2,-,\ep} (s,y)]\sigma_s(y) W(ds, dy) \rightarrow 0.
\end{equation}

\noindent
{\it Proof of (\ref{ecua1}):}
Applying Burkholder's inequality and Minkowski's inequality, we can write
\begin{align*}
\Big\| \int_0^t \int_{\mathbb{R}^2}& G_{\epsilon}(v-y) \Big(\frac{\psi(s,y)}{\psi_{\epsilon}(s,v)} G_{t-s}(x-v) - G_{t-s}(x-y) \Big)\sigma_s(y) dv W(ds,dy) \Big\|_p^2\\
&\leq C\int_0^t \int_{\mathbb{R}} \Big\| \int_{\mathbb{R}} G_{\epsilon}(v-y) \frac{\psi(s,y)}{\psi_{\epsilon}(s,v)} G_{t-s}(x-v) dv - G_{t-s}(x-y) \Big\|_p^2 f(y)^2 dy ds\\
& \leq C( B_{1,\ep} +B_{2,\ep}),
\end{align*}
where
\[
B_{1,\ep}= \int_0^t \int_{\mathbb{R}} \left(  \int_{\mathbb{R}} G_{\epsilon}(v-y)  G_{t-s}(x-v)
 \left\| \frac{\psi(s,y)}{\psi_{\epsilon}(s,v)} -1\right\|_p  dv \right)^2 f(y)^2 dy ds
 \]
and
\[
B_{2,\ep}=  \int_0^t \int_{\mathbb{R}}  ( G_{t-s+\epsilon}(x-y)-  G_{t-s}(x-y))^2 
     f(y)^2 dy ds.
     \]
Using the definition of $\psi_{\epsilon}$ and \ref{prop3.4}, it is not difficult to see that $\psi_{\epsilon}$ is H\"older continuous of order 1/2 in the spatial variable.  With this and Lemma \ref{lem4.1}, we have
$$
\left\| \frac{\psi(s,y)}{\psi_{\epsilon}(s,v)} -1\right\|_p \leq C (\epsilon^{1/4} + |y-v|^{1/2}).
$$ 
Therefore, 
\begin{align*}
B_{1,\epsilon} &\leq C \epsilon^{1/2} \int_0^t \int_{\mathbb{R}} G_{t+\epsilon-s}^2(x-y)f(y)^2 dy ds \\
&\hspace{10mm}+C \int_0^t \int_{\mathbb{R}} \Big( \int_{\mathbb{R}} G_{\epsilon}(v-y)G_{t-s}(x-v)|v-y|^{1/2} dv \Big)^2 f(y)^2 dy ds.
\end{align*}
Clearly,
$$
\int_0^t \int_{\mathbb{R}} G_{t+\epsilon-s}^2(x-y) f(y)^2 dy ds \leq C,
$$
by H\"older's inequality and assumption (A.2). Next, make the change of variables $v-y=z$ and choose $q_1>1$ such that $\frac 1{q_1} + \frac 2q=1$, to get
\begin{align*}
\int_0^t \int_{\mathbb{R}} &\Big( \int_{\mathbb{R}} G_{\epsilon}(v-y)G_{t-s}(x-v)|v-y|^{1/2} dv \Big)^2 f(y)^2 dy ds\\
 &\leq \|f\|_{L^q(\mathbb{R})}^2 \int_0^t \Big( \int_{\mathbb{R}} G_{\epsilon}(z) |z|^{1/2} dz\Big)^2\|G_{t-s}\|_{L^{2q_1}(\mathbb{R})}^2 ds \\
% &= C  \epsilon^{1/2} \int_0^t \|G_{t-s}\|_{L^{2q_1}(\mathbb{R})}^2 ds\\
 &\leq C \epsilon^{1/2}.
% \|G_{t-s}_{L^{2q_1}(\mathbb{R})}^2 ds
\end{align*}
%\beas
%B_{1,\ep}& \le&  C \ep^{1/4} \int_0^t  \int_{\RR} G^2_{t-s+\ep}(x-y) f^2(y) dyds \\
%&\le & C \ep^{1/4}  \| f\|^2_{L^q(\RR)} \int_0^t \| G_{t-s +\ep} \|^2_{2q_1} ds \\
%&\le & C \ep^{1/4}  \| f\|^2_{L^q(\RR)},
%\eeas
Hence, $B_{1,\epsilon} \to 0$ as $\epsilon \to 0$. On the other hand, again using Lemma \ref{lem1}, yields
\[
B_{2,\ep} \le C  \| f\|^2_{L^q(\RR)} \ep^{1/2-1/q}.
\]

\noindent
{\it Proof of (\ref{ecua2}):}
Adding and subtracting $\psi(s,v)$ and $\frac{\partial \psi}{\partial v}(s,v)$ in the $dz$ integrals of the first and second terms, respectively, we get 
the decomposition
\[
\int_0^t \int_{\mathbb{R}}  [G_{1,+,\ep} (s,y)- G_{2,+,\ep} (s,y)]\sigma_s(y) W(ds, dy) 
= J_{1,\ep} + J_{2,\ep} + J_{3,\ep},
\]
where
\[
J_{1,\ep}= \int_0^t \int_{0}^{\infty}  \int_{\mathbb{R}} \frac{G_{t-s}(x-v)}  {\psi^2_{\epsilon}(s,v)}
 \frac{\partial \psi_{\epsilon}}{\partial v}(s,v) 
\left( \int_y^{\infty} G_{\epsilon}(v-z)\big[ \psi(s,z)-\psi(s,v)\big] dz  \right) dv \sigma_s(y) W(ds,dy),
\]
\[
 J_{2,\ep}=   \int_0^t \int_{0}^{\infty}  \int_{\mathbb{R}} \frac{G_{t-s}(x-v)}{\psi_{\epsilon}(s,v)}
\left(  \int_y^{\infty} G_{\epsilon}(v-z) \Big[\frac{\partial \psi}{\partial z}(s,z)- \frac{\partial \psi}{\partial v}(s,v)\Big]dz \right)dv \sigma_s(y) W(ds,dy) 
\]
and 
\begin{align*}
 J_{3,\ep}= \int_0^t \int_{0}^{\infty}  \int_{\mathbb{R}}   \frac {G_{t-s}(x-v) } {\psi_{\epsilon}^2(s,v)} \Big[\psi(s,v)\frac{\partial \psi_{\epsilon}}{\partial v}(s,v)
 -\psi_{\epsilon}(s,v)\frac{\partial \psi}{\partial v}(s,v)\Big]\\ 
 \times \Big( \int_y^{\infty} G_{\epsilon}(v-z) dz\Big)dv~ \sigma_s(y) W(ds,dy). 
\end{align*}
Applying Burkholder and  Minkowski inequalities yields, for any $p\ge 2$,
\beas
\|J_{1,\ep}\|_p^2 & \le& c_p 
 \int_0^t \int_{0}^{\infty}   \bigg(  \int_{\mathbb{R}} G_{t-s}(x-v) \left\|   \frac 1{\psi^2_{\epsilon}(s,v)}
 \frac{\partial \psi_{\epsilon}}{\partial v}(s,v)  \right\|_{2p} \\
 && \times 
 \int_{\RR}  G_{\epsilon}(v-z) \left\|  \psi(s,z)-\psi(s,v) \right\|_{2p} dzdv  \bigg) ^2  f^2(y)dyds.
\eeas
By Lemma \ref{lem4.1} and Proposition \ref{prop3.4}, we obtain
\beas
\|J_{1,\ep}\|_p^2  &\le& c_p   \| f\|^2_{L^2(\RR)} \int_0^t  \left(  \int_{\mathbb{R}} G_{t-s}(x-v) 
 \int_{\RR}  G_{\epsilon}(v-z)  |z-v|^{1/2}  dzdv  \right) ^2  ds\\
&\le & C\ep^{1/2}.
\eeas

For the term $J_{2,\ep}$ we can write, using Burkholder and Minkowski inequalities and applying Lemma \ref{lem4.1}
\beas
\|J_{2,\ep}\|_p^2 & \le& c_p 
 \int_0^t \int_{0}^{\infty}   \bigg(  \int_{\mathbb{R}} G_{t-s}(x-v) \left\|   \psi^{-1}_{\epsilon}(s,v)
  \right\|_{2p}  \\
  &&\times 
 \int_{\RR}  G_{\epsilon}(v-z) \left\|   \frac{\partial \psi}{\partial z}(s,z)- \frac{\partial \psi}{\partial v}(s,v) \right\|_{2p} dzdv  \bigg) ^2  f^2(y)dyds\\
 &\le& c_p   \| f\|^2_{L^2(\RR)}
 \int_0^t   \bigg(  \int_{\mathbb{R}^2} G_{t-s}(x-v) 
   G_{\epsilon}(v-z) \left\|   \frac{\partial \psi}{\partial z}(s,z)- \frac{\partial \psi}{\partial v}(s,v) \right\|_{2p} dzdv  \bigg) ^2  ds.
\eeas
By the continuity of $(s,z) \rightarrow  \frac{\partial \psi}{\partial z}(s,z)$ in $L^p$, for any $p\ge 2$, in $[0,t] \times \RR$, established in
Remark \ref{rem2}, it follows that the integrand of the above integral on $[0,t]$ converges to zero for any $s\in [0,t]$. On the other hand, the integrand is bounded by an integrable function, which does not depend on $\ep$. Therefore, by the dominated convergence theorem, we conclude that 
$\|J_{2,\ep}\|_p^2$ converges to zero as $\ep$ tends to zero.

Finally for $J_{3,\ep}$,  using Burkholder and Minkowski inequalities and applying Lemma \ref{lem4.1}, we have
\beas
\|J_{3,\ep}\|_p^2 & \le& c_p   \| f\|^2_{L^2(\RR)}
 \int_{0}^{t}   \bigg(  \int_{\mathbb{R}} G_{t-s}(x-v) \left\|    \psi^{-2}_{\epsilon}(s,v)
  \right\|_{2p}  \\
  &&\times 
  \left\|   \psi(s,v)  \frac{\partial \psi_\ep}{\partial v}(s,v)-  \psi_\ep(s,v)\frac{\partial \psi}{\partial v}(s,v) \right\|_{2p} dv  \bigg) ^2  ds.
  \eeas
For $(s,v) \in (0,t) \times \RR$, the term  $  \left\|   \psi(s,v)  \frac{\partial \psi_\ep}{\partial v}(s,v)-  \psi_\ep(s,v)\frac{\partial \psi}{\partial v}(s,v) \right\|_{2p}$
converges to zero as $\ep$ tends to zero, due to the estimates (\ref{kk4}) and (\ref{kk5}).  Therefore, by the dominated convergence theorem
we conclude that $ \|J_{3,\ep}\|_p^2$ tends to zero as $\ep$ tends to zero.
The proof of (\ref{ecua3}) is similar and omitted.

%%%%%%%%%%%%%%%%%%%%%%%%%%%%%%%%%%%%%%%%%%%%%%%%%%%%%%%%%%%%%%%%%%%%%%%%%%%%%%%%%%%%%%%%%%%%%%%%%%%%%%%%%%%%%%%%%%%%%%%%%%%%%%%%%%%%%%%%%%%%%%%%%%%%%%%%%%%%%%%%%%%%%%%%%%%%%%%%%%%%%%%%%%%%%%%%%%%%%%%%%%%%
%%%%%%%%%%%%%%%%%%%%%%%%%%%%%%%%%%%%%%%%%%%%%%%%%%%%%%%%%%%%%%%%%%%%%%%%%%%%%%%%%%%%%%%%%%%%%%%%%%%%%%%%%%%%%%%%%%%%%%%%%%%%%%%%%%%%%%%%%%%%%%%%%%%%%%%%%%%%%%%%%%%%%%%%%%%%%%%%%%%%%%%%%%%%%%%%%%%%%%%%%%%%
\medskip
\noindent
{\it Step 4.}
Finally, we show that  $ A_{4,\ep} + A_{5,\ep}$ converges to zero  in $ L^p(\Omega)$  for all $p\ge 2$, as $\ep$ tends to zero.
 Once again, we show convergence of the terms when $ z \geq y \geq 0$. When $z \leq y \leq 0$, the proof follows in the same way.  
 The contribution of $\{y>0\}$ can be expressed as follows
  \begin{align*}
&& H_\ep:= \int_0^t \int_0^{\infty} \int_{\mathbb{R}} \frac{\partial }{\partial v}G_{t-s}(x-v) \frac{1}{\psi_{\epsilon}^2(s,v)}\Big( \int_y^{\infty} G_{\epsilon}(v-z) \psi(s,z) dz\Big) 
\\ && \qquad   \times
 \Big( \psi_{\epsilon}(s,v) - \int_y^{\infty} G_{\epsilon}(v-z) \psi(s,z) dz \Big) \sigma_s(y)^2 dv dy ds.
\end{align*}
Adding and subtracting $\psi(s,v)$ in the second $dz$ integral, we get
\[
 H_\ep = \sum_{i=1}^3 H_{i,\ep},
 \]
 where
 \[
 H_{i,\ep}=
 \int_0^t \int_0^{\infty} \int_{\mathbb{R}} \frac{\partial }{\partial v}G_{t-s}(x-v) \frac{1}{\psi_{\epsilon}^2(s,v)}\Big( \int_y^{\infty} G_{\epsilon}(v-z) \psi(s,z) dz\Big)  F_i\sigma_s(y)^2 dv dy ds,
\]
where 
\begin{align*}
F_1 &:= \int_{-\infty}^y G_{\epsilon}(v-z) \psi_{\epsilon}(s,v)dz, \\
F_2 &:= \int_y^{\infty} G_{\epsilon}(v-z) \big[ \psi_{\epsilon}(s,v) - \psi(s,v) \big] dz, \\
F_3 &:= \int_y^{\infty} G_{\epsilon}(v-z) \big[ \psi(s,v)-\psi(s,z)\big] dz.
\end{align*}
We show convergence of each of these three terms, one at a time. To control the term  $H_{1,\ep}$, apply Minkowski's inequality, H\"older's inequality, and Lemma \ref{lem4.1},  to get, for anu $p\ge 2$
\begin{align*}
 \| H_{1,\ep} \|_p
&\leq C \int_0^t \int_0^{\infty} \int_{\mathbb{R}} \frac{\partial}{\partial v} G_{t-s}(x-v) \Big(\int_y^{\infty} G_{\epsilon}( v-z) dz \Big) \Big(\int_{-\infty}^y G_{\epsilon}( v-z) dz \Big) f(y)^2 dv dy ds.
\end{align*}
Notice that, for any fixed $s, y, v,$ we have 
$$
\frac{\partial }{\partial v}G_{t-s}(x-v) \Big(\int_y^{\infty} G_{\epsilon}( v-z) dz \Big) \Big(\int_{-\infty}^y G_{\epsilon}( v-z) dz \Big) f(y)^2 \to 0
$$
as $\epsilon \to 0$. Furthermore, we can trivially bound this integrand by
$$
 \Big|\frac{\partial }{\partial v}G_{t-s}(x-v)\Big| f(y)^2,
$$
which is independent of $\epsilon$, and $(dv \otimes dy \otimes ds)$-integrable on $\mathbb{R}\times (0,\infty) \times [0,t]$. Hence, by dominated convergence, $\|H_{1,\ep}\|_p\rightarrow 0$ as $\ep\rightarrow 0$.

We bound the term with $H_{2,\ep}$ as follows
\begin{align*}
\|H_{2,\ep}\|_p
&\leq C \sup_{s,v}\| \psi_{\epsilon}(s,v) - \psi(s,v) \|_{2p} \int_0^t \int_0^{\infty}\int_{\mathbb{R}} \Big|\frac{\partial }{\partial v}G_{t-s}(x-v)\Big| f(y)^2 dv dy ds
\end{align*}
for some positive constant $C>0$ and all $p\ge 2$. This quantity  converges to zero as $\epsilon \to 0$  by Lemma \ref{lem4.1}. Lastly, apply the same techniques to get
\begin{align*}
\| H_{3,\ep} \|_p
&\leq C \int_0^t \int_0^{\infty} \int_{\mathbb{R}} \Big|\frac{\partial }{\partial v}G_{t-s}(x-v)\Big| \Big(\int_y^{\infty} G_{\epsilon}(v-z) |v-z|^{1/2} dz \Big) f(y)^2 dv dy ds\\
&\leq C \epsilon^{1/4}.
\end{align*}
which converges to zero as $\epsilon \to 0$. Therefore, $ A_{4,\ep} + A_{5,\ep}$  converges to zero in $L^p(\Omega)$ as $\epsilon \to 0$, for all $p\ge 2$.
 
 \medskip
 \noindent
 {\it Step 5.} As a conclusion, we deduce that  the process $v(t,x)$ satisfies 
\begin{align*}
v(t,x) = \int_{\mathbb{R}} G_t (x-y) u_0(y) dy + \frac{1}{2} \int_0^t \int_{\mathbb{R}} \frac{\partial  }{\partial y} G_{t-s} (x-y) v(s,y)^2 dy ds \\+ \int_0^t \int_{\mathbb{R}} G_{t-s}(x-y) \sigma_s(y) W(ds,dy).
\end{align*} 
Since $u$ also satisfies this equation, we have $u \equiv v$ by uniqueness of solutions.
\end{proof}

%%%%%%%%%%%%%%%%%%%%%%%%%%%%%%%%%%%%%%%%%%%%%%%%%%%%%%%%%%%%%%%%%%%%%%%%%%%%%%%%%%%%%%%%%%%%%%%%%%%%%%%%%%%%%%%%%%%%%%%%%%%%%%%%%%%%%%%%%%%%%%%%%%%%%%%%%%%%%%%%%%%%%%%%%%%%%%%%%%%%%%%%%%%%%%%%%%%%%%%%%%%%

\section{Regularity of Burgers' Equation}
We start with an easy, yet interesting, consequence of some of our results about $\psi$ and its regularity.
\begin{proposition}   \label{prop5.1}
Let $u(t,x)$ denote the solution to Burgers' equation (\ref{eq1}).  Assume that the initial condition $u_0$ is $\alpha$ H\"older continuous for some $\alpha \in (0,1)$. Then, for all $t,s\in [0,T]$, $x,y \in \mathbb{R}$, and $p \geq 2$, we have
$$
\| u(t,x) - u(s,y)\|_p \leq C(|t-s|^{\frac{\alpha}{2} \wedge (\frac 14-\frac 1{2q})}+ |x-y|^{\alpha\wedge (\frac 12-\frac 1q)}).
$$
\end{proposition}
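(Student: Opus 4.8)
The plan is to exploit the identity $u(t,x)=v(t,x)=-2\,\psi(t,x)^{-1}\,\partial_x\psi(t,x)$ established in the previous section, together with the regularity and moment estimates already proved for $\psi$ and $\partial_x\psi$. Writing $\psi'=\partial_x\psi$ for brevity, the difference $u(t,x)-u(s,y)$ is $-2$ times a difference of quotients, which I would split using the elementary algebraic identity
\[
\frac{\psi'(t,x)}{\psi(t,x)}-\frac{\psi'(s,y)}{\psi(s,y)}
=\frac{\psi'(t,x)-\psi'(s,y)}{\psi(t,x)}
+\psi'(s,y)\,\frac{\psi(s,y)-\psi(t,x)}{\psi(t,x)\,\psi(s,y)}.
\]
The first term isolates a space-time increment of $\psi'$, while the second isolates an increment of $\psi$ multiplied by factors whose $L^p$-norms are uniformly controlled.

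To each term I would apply the generalized H\"older inequality in $\Omega$. For the first term, choosing the two exponents equal to $2p$ gives
\[
\left\|\frac{\psi'(t,x)-\psi'(s,y)}{\psi(t,x)}\right\|_p
\le \|\psi'(t,x)-\psi'(s,y)\|_{2p}\,\|\psi(t,x)^{-1}\|_{2p},
\]
and for the second term, splitting the four factors at exponent $4p$ gives
\[
\Big\|\psi'(s,y)\,\frac{\psi(s,y)-\psi(t,x)}{\psi(t,x)\psi(s,y)}\Big\|_p
\le \|\psi'(s,y)\|_{4p}\,\|\psi(s,y)-\psi(t,x)\|_{4p}\,\|\psi(t,x)^{-1}\|_{4p}\,\|\psi(s,y)^{-1}\|_{4p}.
\]
I would then invoke Remark \ref{rem1} (uniform boundedness of $\|\psi\|_r$ and $\|\psi^{-1}\|_r$) and Proposition \ref{prop3.6} (uniform boundedness of $\|\psi'\|_r$ on $[0,T]\times\mathbb{R}$) to conclude that all the ``constant'' factors above are finite and bounded uniformly in $(t,s,x,y)$. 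The genuinely variable factors are $\|\psi'(t,x)-\psi'(s,y)\|_{2p}$, bounded by Proposition \ref{prop3.7} by $C(|t-s|^{\frac\alpha2\wedge(\frac14-\frac1{2q})}+|x-y|^{\alpha\wedge(\frac12-\frac1q)})$, and $\|\psi(s,y)-\psi(t,x)\|_{4p}$, bounded by Proposition \ref{prop3.4} by $C(|t-s|^{1/2}+|x-y|^{1/2})$.

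It then remains only to combine the exponents. Since $\|u(t,x)\|_p$ is uniformly bounded (as noted in the previous section, $v$ has uniformly bounded moments of order $p$), the claim is trivial by the triangle inequality once an increment exceeds $1$, so I may assume $|t-s|\le 1$ and $|x-y|\le 1$, in which range smaller powers dominate larger ones. Because $\alpha\le 1$ forces $\frac\alpha2\le\frac12$, and because $\frac14-\frac1{2q}<\frac12$ and $\frac12-\frac1q<\frac12$, the exponents produced by Proposition \ref{prop3.7} never exceed the $1/2$-exponents coming from Proposition \ref{prop3.4}; hence the former are the binding ones and the two contributions sum to the asserted estimate. I do not expect a serious obstacle here: the only point requiring care is that the higher moments $\|\cdot\|_{2p}$ and $\|\cdot\|_{4p}$ appearing above are genuinely controlled. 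This is covered by Remark \ref{rem1}, stated for all real $p\ge2$, and Proposition \ref{prop3.6}, stated for integers; since $L^r(\Omega)$-norms are nondecreasing in $r$ on a probability space, a non-integer moment is dominated by the next integer moment, so the intermediate exponents are handled as well.
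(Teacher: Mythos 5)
Your proposal is correct and follows essentially the same route as the paper: the same splitting of the difference of quotients into an increment of $\partial_x\psi$ over $\psi$ plus an increment of $\psi$ times uniformly bounded factors, controlled via H\"older's inequality, the uniform moment bounds of Remark \ref{rem1} and Proposition \ref{prop3.6}, and the regularity estimates of Propositions \ref{prop3.4} and \ref{prop3.7}. Your explicit remarks on reducing to increments at most $1$ and on interpolating non-integer moments are sound and merely make explicit what the paper leaves implicit.
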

\begin{proof}
Indeed, by adding and subtracting an appropriate term, we have
\begin{align*}
\| u(t,x) - u(t,y)\|_p &= 
%&= \bigg\| -2 \frac{\partial }{\partial x} \log \psi(t,x)+2 \frac{\partial }{\partial y} \log \psi(t,y)\bigg\|_p \\
2 \Big\| \frac{1}{\psi(s,y)}  \frac{\partial \psi}{\partial y}(s,y)-\frac{1}{\psi(s,x)}  \frac{\partial \psi}{\partial x}(s,x) \Big\|_p \\
&\leq 2 \Big\| \frac{\partial \psi}{\partial y}(s,y)\frac{\psi(s,x) -\psi(s,y)}{\psi(s,x)\psi(s,y)} \Big\|_p + 2 \Big\| \frac{1}{\psi(s,x)}\Big[\frac{\partial \psi}{\partial y}(s,y)-\frac{\partial \psi}{\partial x}(s,x)\Big]\Big\|_p \\
&\leq  C \left( |x-y|^{ 1/2 }+  |x-y|^{\alpha \wedge (\frac 12-\frac{1}{q})}  \right),
\end{align*}
where the last inequality follows from Cauchy-Schwarz inequality, (\ref{kk7}),  and Propositions \ref{prop3.4},  \ref{prop3.6}, and \ref{prop3.7}. Using the same technique of adding and subtracting appropriate terms yields the desired regularity in $t$.
\end{proof}

\begin{enumerate}
\item[(i)]  From Remark \ref{rem2} it follows that if we do not assume the H\"older continuity of $u_0$, then  we have
$$
\left\|   u(t,x) -  u(s,y)\right\|_p \leq C  (t\wedge s) ^{-1/2}  \big(|t-s|^{\frac 14 -\frac 1{2q}}+ |x-y|^{ \frac 12 -\frac 1q }\big),
$$
  Moreover, $ u(t,x)$ is continuous in $L^p(\Omega)$ on $[0,T]\times \RR$ for all $p\ge 2$.
  \item[(ii)] Proposition (\ref{prop5.1}) allows us to deduce the existence of a version of $u(t,x)$, which is locally H\"older continuous
  in space of order  $ \alpha \wedge (\frac 12-\frac 1{q})$  and in time of order  $ \frac{\alpha}{2} \wedge (\frac 14-\frac 1{2q})$.
  \end{enumerate}

The next proposition provides some moment estimates for the solution to Burgers equation.

\begin{proposition}
Let $u(t,x)$ denote the solution to Burgers' equation (1.1) Then, for all $t\in [0,T]$ and $x \in \mathbb{R}$, and $p \geq 2$, we have
\[
\sup_{x \in  \mathbb{R}}  \| u(t,x) \|_p  \le 
 K  c_{2p}  (t\vee 1)^{1- \frac 1{q}}  \exp\Big(   \| f\|_{L^2(\RR)}^2 t  \big(1/8+c_{2p} + \|f\|_{L^2(\mathbb{R})}^2 t/2 \big) +\frac{1}{4p}\|u_0\|_{L^1(\mathbb{R})} \Big),
\]
where $c_p$ is the optimal constant in Burkholder's inequality and $K$ is a constant depending on
 $q$, $\|f\| _{L^q(\RR)} $,  $\|f\| _{L^2(\RR)}$ , $ \|u_0\|_{L^1(\RR)}$, and  $\|u_0\|_\infty$.
\end{proposition}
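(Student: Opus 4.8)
The plan is to exploit the Hopf--Cole representation $u(t,x)=v(t,x)=-\frac{2}{\psi(t,x)}\frac{\partial\psi}{\partial x}(t,x)$ established in the previous section, and to estimate the $L^p$ norm of this product by decoupling the reciprocal factor $\psi^{-1}$ from the derivative factor $\partial_x\psi$. Since the desired bound is just a product of two moment estimates that are already available, the core of the argument is a single application of the Cauchy--Schwarz inequality at the level of the $p$-th moment: writing $|u(t,x)|^p=2^p|\psi(t,x)^{-1}|^p\,|\partial_x\psi(t,x)|^p$ and applying Cauchy--Schwarz to the expectation gives
\[
\|u(t,x)\|_p\le 2\,\big\|\psi(t,x)^{-1}\big\|_{2p}\,\Big\|\frac{\partial\psi}{\partial x}(t,x)\Big\|_{2p}.
\]
The price of this decoupling is that both factors must now be controlled at the doubled order $2p$, which for $p\ge 2$ remains an even integer $\ge 4$, so that Remark~\ref{rem1} and Proposition~\ref{prop3.6} apply verbatim.

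Next I would insert the two component estimates at order $2p$. From the bound (\ref{kk7}) in Remark~\ref{rem1}, raising to the power $1/(2p)$ yields
\[
\big\|\psi(t,x)^{-1}\big\|_{2p}\le \exp\Big(\tfrac{t}{8}\|f\|_{L^2(\RR)}^2+\tfrac{1}{4p}\|u_0\|_{L^1(\RR)}\Big),
\]
uniformly in $x$, which already accounts for the $\tfrac18\|f\|_{L^2(\RR)}^2 t$ and $\tfrac1{4p}\|u_0\|_{L^1(\RR)}$ contributions to the exponent. Proposition~\ref{prop3.6}, applied with $p$ replaced by $2p$, supplies
\[
\sup_{x\in\RR}\Big\|\frac{\partial\psi}{\partial x}(t,x)\Big\|_{2p}\le K\,c_{2p}\,(t\vee 1)^{1-1/q}\exp\Big(c_{2p}\|f\|_{L^2(\RR)}^2 t+\tfrac12\|f\|_{L^2(\RR)}^4 t^2\Big),
\]
which produces the prefactor $K c_{2p}(t\vee1)^{1-1/q}$ together with the remaining terms $c_{2p}\|f\|_{L^2(\RR)}^2 t$ and $\tfrac12\|f\|_{L^2(\RR)}^4 t^2$ in the exponent.

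Multiplying the two displays and collecting the exponentials, the total exponent becomes
\[
\|f\|_{L^2(\RR)}^2 t\Big(\tfrac18+c_{2p}+\tfrac12\|f\|_{L^2(\RR)}^2 t\Big)+\tfrac1{4p}\|u_0\|_{L^1(\RR)},
\]
after rewriting $\tfrac12\|f\|_{L^2(\RR)}^4 t^2=\|f\|_{L^2(\RR)}^2 t\cdot\tfrac12\|f\|_{L^2(\RR)}^2 t$, and the leading factor of $2$ is absorbed into the constant $K$; this is exactly the claimed bound. I do not expect any genuine obstacle here: the only point to watch is the bookkeeping forced by the Cauchy--Schwarz split, namely that the doubled order $2p$ is what makes $c_{2p}$ (rather than $c_p$) appear and what turns the $\tfrac12$ in front of $\|u_0\|_{L^1(\RR)}$ into $\tfrac1{4p}$, and that one should record that $2p$ is still an admissible integer order for Remark~\ref{rem1} and Proposition~\ref{prop3.6}. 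If one wished to avoid the factor-of-two inflation in the order, one could instead attempt a direct Gronwall estimate on $\|u\|_p$, but this would merely re-prove Proposition~\ref{prop3.6} and is unnecessary given the results already in hand.
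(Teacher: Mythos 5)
Your proposal is correct and follows essentially the same route as the paper: the single H\"older/Cauchy--Schwarz step $\|u(t,x)\|_p\le 2\|\psi(t,x)^{-1}\|_{2p}\|\partial_x\psi(t,x)\|_{2p}$, followed by inserting the bound (\ref{kk7}) from Remark \ref{rem1} and the bound of Proposition \ref{prop3.6} at order $2p$. Your constant bookkeeping matches the stated estimate, so there is nothing to add.
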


\begin{proof}
By  H\"older's  inequality, we can write
\[
\|u(t,x)\|_p= 2  \left \| \psi(t,x)^{-1} \frac{\partial \psi}{\partial x}(t,x) \right\|_p 
\le 2 \left\| \psi(t,x)^{-1}  \right\|_{2p}  \left\|  \frac{\partial \psi}{\partial x}(t,x) \right\|_{2p}.
\]
Then, the result follows from Remark \ref{rem1} and Proposition  \ref{prop3.6}.
\end{proof}

\end{document}